\documentclass[11pt]{amsart}
\usepackage{amsmath,amsthm}
\usepackage{graphics}
\usepackage{latexsym}
\usepackage{verbatim}
\usepackage{amsmath}
\usepackage{amsthm}
\usepackage{amssymb}
\usepackage{epsfig}
\usepackage{epstopdf}
\usepackage{subfig}
\usepackage{hyperref}
\usepackage[]{hyperref}
\usepackage{setspace}
\usepackage[noend]{algpseudocode}
\usepackage{algorithmicx,algorithm}
\hypersetup{urlcolor=blue, citecolor=red}
\usepackage{dsfont}
\usepackage[table]{xcolor}
\usepackage{multirow}
\usepackage{float}
\usepackage{tikz}
\usetikzlibrary{arrows, calc, shapes, positioning, decorations.pathreplacing}
\usepackage{comment}
\usepackage{MnSymbol}
\usepackage{diagbox}
\usepackage{multirow}
\newcommand{\minitab}[2][l]{\begin{tabular}{#1}#2\end{tabular}}

\setlength\parindent{0pt}
\ifpdf
  \DeclareGraphicsExtensions{.eps,.pdf,.png,.jpg}
\else
  \DeclareGraphicsExtensions{.eps}
\fi
\setlength{\topmargin}{-0.5in}
\setlength{\oddsidemargin}{0.1in}
\setlength{\evensidemargin}{0.1in}
\textheight=8.2 true in
\textwidth=6.5 true in
\topmargin 30pt

\newtheorem{theorem}{Theorem}[section]

\newtheorem{remark}[theorem]{Remark}


\title[Asymptotic preserving scheme for anisotropic elliptic equations with DNN]{Asymptotic preserving scheme for anisotropic elliptic equations with deep neural network}

\author[Long Li and Chang Yang]{}

\subjclass[2010]{Primary: 68Q25; 68R10; Secondary: 68U05}
\keywords{Asymptotic preserving scheme;
Anisotropic elliptic equations;
First-order system least-squares formulations;
Deep neural network.}

\email{long.li@hit.edu.cn}
\email{yangchang@hit.edu.cn}


\begin{document}
\maketitle

\centerline{\scshape Long Li, Chang Yang}


\medskip
{\footnotesize
    \centerline{School of Mathematics, Harbin Institute of Technology,}
    \centerline{No. 92 West Dazhi Street, Nangang District, 150001 Harbin, China}
}

\begin{abstract}
In this paper, a new asymptotic preserving (AP) scheme is proposed for the anisotropic elliptic equations. Different from previous AP schemes, the actual one is based on first-order system least-squares for second-order partial differential equations, and it is uniformly well-posed with respect to anisotropic strength. The numerical computation is realized by a deep neural network (DNN), where least-squares functionals are employed as loss functions to determine parameters of DNN. Numerical results show that the current AP scheme is easy for implementation and is robust to approximate solutions or to identify anisotropic strength in various  2D and 3D tests.
\end{abstract}


\tableofcontents

\section{Introduction}
\label{sec:introduction}

The study of the anisotropic elliptic equations~\cite{Degond2017Asymptotic} is very important for two aspects: on the one hand, in mathematical sense, the  anisotropic elliptic equations become ill-posed (loss uniqueness of solution) when anisotropic strength, denoted by $\varepsilon$, goes to 0. On the other hand, in numerical sense, the numerical results are polluted once $\varepsilon\ll1$. Moreover, the  anisotropic elliptic equations have important applications in the physics of magnetized plasma, such as confinement plasma~\cite{Meiss2003Plasma}, plasma of ionosphere~\cite{Besse2004A}, plasma propulsion~\cite{Meier2011}, {\it  etc}.

An efficient method for solving the anisotropic elliptic equations is asymptotic preserving (AP) scheme~\cite{yang2020Preserving}. The basic idea for AP scheme is to identify a unique solution as $\varepsilon\to0$. Various types of AP schemes have been proposed in literature, such as the duality-based method~\cite{degond_asymptotic_2009, degond_duality-based_2012, Christophe2013Efficient, yang_iterative_2018}, the micro-macro method~\cite{2012An}, the two field iterated method~\cite{deluzet_two_2019, yang_numerical_2019}, Tang's method~\cite{tang_asymptotic_2017, wang_uniformly_2018}, {\it  etc}. In most of these methods, an auxillary variable was introduced to eliminate anisotropy of original  anisotropic elliptic equations, so we have a system of second-order elliptic equations.

In this paper, a new AP scheme based on first-order system (FOS) least-squares (LS) is proposed. The first-order system least-squares for second-order elliptic partial differential equations (PDEs) have been studied in~\cite{Cai1994First, cai2020deep}, where they pointed out two striking features: ``on the one hand, it naturally symmetrizes and stabilizes the original problem; on the other hand, value of the corresponding LS functional at the current approximation is an accurate {\it a posteriori} error estimator''. We thus develop APFOS scheme by introducing new auxillary variables, which are equal to directional gradients of solution. Another important difference of our APFOS scheme is that these auxillary variables may not be unique, particularly in the case with closed magnetic field, instead they directional derivatives are unique and used to determine solution of anisotropic elliptic equation. Therefore, our APFOS scheme is uniformly well-posed with respect to $\varepsilon\geq0$. We refer to Section~\ref{sec:APFOSscheme} for detailed explanation. In our knowledge, this is the first time to combine AP scheme with FOS of elliptic PDEs, and thus AP scheme may benefit the striking features of FOS. In contrast, the main task of the existing AP schemes in literature is to design a way to uniquely determine auxillary variables. Therefore, our APFOS scheme is free from this design difficult. Finally, we rewrite the APFOS scheme into a LS formulation, that is a minimization problem of LS functional, where boundary conditions are also added with proper scales.
Next step is to solve this APFOS-LS formulation efficiently.

Recently, solving PDEs in deep neural network (DNN) framework becomes popular~\cite{2019Machine, Raissi2019, Cme2020Deep}. Compare to classical methods for numerical resolution of PDEs, the DNN is quite different. For instance, in finite element method or spectral method, {\it  etc}., a discrete functional space is firstly introduced, which has finite dimensions and is an approximation of continuous functional space. Once a basis of the discrete functional space is defined, then unknowns of PDEs are just linear combination of the basis, and the combination coefficients are parameters to be determined. Obviously, more accurate solution means more parameters required. In contrast, in DNN framework, we define a fully nonlinear function, with certain weights and biases to be determined. Thanks to the non-linearity, the DNN solution may describe unknowns of PDEs with few parameters. Particularly, some recent works focus on solving elliptic PDEs in DNN framework~\cite{cai2020deep, HAN2020109672, 2020Int}. They have shown that, on the one hand, DNN can solve many types of elliptic PDEs, on the other hand, DNN can treat well some particular cases without special effort, which is not the case for classical methods.

Therefore, we discretize the APFOS-LS functional  in DNN framework, that is to substitute solutions of APFOS scheme by a DNN function and to discretize the APFOS-LS functional by a summation of the equations and boundary condition evaluated on certain collocation points. Minimizing the discrete APFOS-LS functional with respect to weights and biases of DNN yields an optimal DNN, which is approximate solutions of APFOS scheme. The APFOS-LS method in DNN framework is easy to implement since rich tools in deep learning can be  directly employed, such as dominant automatic differentiation technique, stochastic gradient descent methods, {\it  etc}. Numerical tests in 2D/3D show our APFOS-LS method is uniformly accurate with respect to $\varepsilon$, even for a difficult test case with closed magnetic field. In contrast, the non-AP LS method does not work for strong anisotropy. This result confirms that DNN framework works only for well-posed PDEs~\cite{Raissi2019}.

A reliable prediction also needs a good knowledge of model parameters. In most real applications, such elements are of physical sense but unavailable, e.g., variational data assimilation for weather forecast~\cite{LeDimet1986}, seismic imaging in geophysics~\cite{R2006A}, computed tomography in medical imaging~\cite{2017Ultrasonic}. For the anisotropic elliptic equation, the anisotropic strength $\varepsilon$ also has an influence on the well-posedness. In this scenario of unknown anisotropic strength $\varepsilon$, it is necessary to determine the coefficient by providing additional \emph{priori} information, which is called inverse problem. In general, a least-squares formulation is established and solved by the gradient descent algorithm. Owing to the ability of tackling strong anisotropy and efficient computation, in this work the APFOS-LS method in the DNN framework is proposed for optimizing the anisotropic strength $\varepsilon$.

This paper is organized as follows. In Section~\ref{sec:APFOS}, the anisotropic elliptic equation is stated, then we introduce APFOS schemes and their corresponding LS formulations. In Section~\ref{sec:DNNFOSAP}, we first briefly review DNN framework, then the APFOS schemes are numerically resolved based on DNN framework. In Section~\ref{sec:parameteresti}, an APFOS identification scheme, for estimating the anisotropic strength parameter $\varepsilon$, is presented. Finally, in Section~\ref{sec:numresults}, numerical implementation is explained and various numerical results are shown.

\section{An asymptotic preserving scheme for anisotropic elliptic equation based on  first order system} \label{sec:APFOS}

\subsection{Problem statement}

In this paper, the following anisotropic elliptic equation is considered:
\begin{equation}\label{eq:anisotropic_pb}
\left\{
\begin{array}{ll}
-\Delta_\bot \phi^\varepsilon - \frac{1}{\varepsilon}\Delta_{\|}\phi^{\varepsilon} = f^\varepsilon, & \text{in } \Omega,\, \\
\phi^\varepsilon = g^\varepsilon, & \text{on }\Gamma_D,\, \\
\nabla_\bot \phi^\varepsilon \cdot \mathbf{n} + \frac{1}{\varepsilon}
\nabla_{\|}\phi^\varepsilon \cdot \mathbf{n} = 0, &  \text{on }\Gamma_N,
\end{array}
\right.
\end{equation}
where $\Omega$ is an open set in $\mathbb{R}^2$ or  $\mathbb{R}^3$, its boundary is defined by $\partial \Omega= \Gamma_D\cup\Gamma_N$, $\mathbf{n}$ is exterior normal vector of $\Gamma_N$. Let $\mathbf{b}$ be a magnetic field, then we can define some parallel or perpendicular differential operators:
\begin{equation*}
\begin{array}{ll}
\nabla_{\|} \phi^\varepsilon = \mathbf{b}\otimes\mathbf{b}\nabla\phi^\varepsilon, & \nabla_{\bot} \phi^\varepsilon = (I - \mathbf{b}\otimes\mathbf{b})\nabla\phi^\varepsilon, \, \\
\Delta_\| \phi^\varepsilon = \nabla\cdot \nabla_{\|}\phi^\varepsilon, & \Delta_\bot \phi^\varepsilon = \nabla\cdot \nabla_{\bot}\phi^\varepsilon.
\end{array}
\end{equation*}
The problem~\eqref{eq:anisotropic_pb} is well-posed for any $\varepsilon>0$. Indeed, in particular, let $\Omega$ be a convex polygon, and $f^\varepsilon\in L^2(\Omega)$, $g^\varepsilon\in H^{3/2}(\Gamma_D)$, then we have a unique solution $\phi^\varepsilon\in H^2_{D}(\Omega)$  of the problem~\eqref{eq:anisotropic_pb}, with
\begin{equation*}
H^2_{D}(\Omega) = \{v\in H^2(\Omega):\, v|_{\Gamma_D} = g^\varepsilon\}.
\end{equation*}

Formally, let $\varepsilon$ go to 0, we get a degenerate problem:
\begin{equation}\label{eq:degenerate_pb}
\left\{
\begin{array}{ll}
\Delta_{\|}\phi^{0} = 0, & \text{in } \Omega,\, \\
\phi^0 = g^0, & \text{on }\Gamma_D,\, \\
\nabla_{\|}\phi^0 \cdot \mathbf{n} = 0, &  \text{on }\Gamma_N.
\end{array}
\right.
\end{equation}
The problem~\eqref{eq:degenerate_pb} is ill-posed due to loss of uniqueness of solution, since any function belongs to set $$H^2_{D,\|} = \{v\in H^2(\Omega):\, v|_{\Gamma_D} = g^0,\, \nabla_\|v=0\text{ in }\Omega\}$$ is solution of~\eqref{eq:degenerate_pb}. In another word, the solution of~\eqref{eq:degenerate_pb} is in kernel of differential operator $\nabla_\|$.

We then define a limit problem
\begin{equation}\label{eq:limit_pb}
\left\{
\begin{array}{ll}
-\Delta_{\bot}\bar\phi = f^0, & \text{in } \Omega,\, \\
\bar\phi = g^0, & \text{on }\Gamma_D,\, \\
\nabla_{\bot}\bar\phi \cdot \mathbf{n} = 0, &  \text{on }\Gamma_N,\, \\
\nabla_{\|}\bar\phi = 0, &  \text{in } \Omega.
\end{array}
\right.
\end{equation}
This limit problem~\eqref{eq:limit_pb} has unique solution $\bar\phi \in H^2_{D,\|}$ thanks to the constraint $\nabla_{\|}\bar\phi = 0  \text{ in } \Omega$.

The question now is how to find a way such that a unique solution is uniformly defined for any $\varepsilon\geq0$, which is the main subject of next subsection.

\subsection{An asymptotic preserving scheme based on first order system}\label{sec:APFOSscheme}

In the sequel, we will omit the superscript $\varepsilon$ if there is no confusion.
\paragraph{\bf Aligned 2D case} To give the basic idea, let us first consider a simple case in 2D, that is the magnetic field lines $\mathbf{b}$ aligned with $z$-axis:
\begin{equation}\label{eq:anisotropic_aligned2D}
\left\{
\begin{array}{ll}
-\partial^2_{x} \phi - \frac{1}{\varepsilon}\partial^2_{z}\phi = f, & \text{in } \Omega,\, \\
\phi = 0, & \text{on }\Gamma_D,\, \\
\partial_z\phi = 0, &  \text{on }\Gamma_N,
\end{array}
\right.
\end{equation}
where $\Omega=(0,1)\times(0,1)$, $\Gamma_D=\{0,1\}\times(0,1)$, $\Gamma_N=(0,1)\times\{0,1\}$.
The equation~\eqref{eq:anisotropic_aligned2D} is well-posed for any $\varepsilon>0$. Formally let $\varepsilon\to0$, the equation~\eqref{eq:anisotropic_aligned2D} becomes ill-posed , since any function $\phi$ depending only on $x$ and satisfying $\phi=0$ on $\Gamma_D$ is the solution of degenerate problem.

A new asymptotic preserving (AP) scheme is introduced by introducing the ansatz:
\begin{equation}
\partial_x\phi = \tau\text{ and }\partial_z\phi = \varepsilon\sigma,\quad\text{in }\Omega.
\end{equation}
Thus the new AP system reads:
\begin{subequations}\label{eqs:AP2Daligned}
\begin{equation}\label{eq:AP2Daligned}
\left\{
\begin{array}{ll}
-\partial_x\tau - \partial_z\sigma = f, & \text{in } \Omega,\, \\
\partial_x\phi = \tau,& \text{in }\Omega,\,\\
\partial_z\phi = \varepsilon\sigma,&\text{in }\Omega,
\end{array}
\right.
\end{equation}
with boundary conditions:
\begin{equation}\label{eq:AP2DalignedBC}
\left\{
\begin{array}{ll}
\phi=0, & \text{on } \Gamma_D,\, \\
\partial_z\phi = 0,& \text{on }\Gamma_N,\,\\
\sigma=0,&\text{on }\Gamma_N.
\end{array}
\right.
\end{equation}
\end{subequations}

Next theorem presents the well-posedness of the AP system~\eqref{eq:AP2Daligned}-\eqref{eq:AP2DalignedBC} (APFOS scheme):
\begin{theorem}\label{thm:APalinged}
Let
\begin{align*}
H^1_D =\{v\in H^1(\Omega):v|_{\Gamma_D}=0\},\,
H^1_N =\{v\in H^1(\Omega):v|_{\Gamma_N}=0\}.
\end{align*}
For any $\varepsilon\geq0$, $f\in L^2(\Omega)$, we have a unique triple $(\phi,\sigma,\tau)\in H^1_D\times H^1_N\times H^1(\Omega)$ being the solution of the  AP system~\eqref{eqs:AP2Daligned}.
\end{theorem}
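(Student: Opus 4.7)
The plan is to split the argument by whether $\varepsilon>0$ or $\varepsilon=0$, since the FOS changes type in the limit. In each case I would reduce the FOS to a scalar elliptic problem whose well-posedness is classical, and then reconstruct the two auxiliary variables from the resulting $\phi$.

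For $\varepsilon>0$, the last two equations of~\eqref{eq:AP2Daligned} are just the definitions $\tau=\partial_x\phi$ and $\sigma=\varepsilon^{-1}\partial_z\phi$. Substituting them into the first equation recovers~\eqref{eq:anisotropic_aligned2D}, which is classically well-posed on the convex square with the stated mixed data and admits a unique $\phi\in H^2_D(\Omega)$. The formulas then place $\tau,\sigma\in H^1(\Omega)$, and the Neumann condition $\partial_z\phi=0$ on $\Gamma_N$ is exactly $\sigma|_{\Gamma_N}=0$; the reduction is an equivalence, so existence and uniqueness transfer from the scalar problem.

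For $\varepsilon=0$, the third equation forces $\phi=\phi(x)$, and the Dirichlet data fixes $\phi(0)=\phi(1)=0$. Integrating the first equation in $z$ over $(0,1)$ and using the boundary conditions $\sigma|_{z=0}=\sigma|_{z=1}=0$ produces the scalar 1D Poisson problem
\begin{equation*}
-\phi''(x) \,=\, \bar f(x):=\int_0^1 f(x,z)\,\dD z,\qquad \phi(0)=\phi(1)=0,
\end{equation*}
which has a unique solution in $H^2(0,1)\cap H^1_0(0,1)$. I would then set $\tau:=\phi'$ and recover $\sigma$ by the explicit antiderivative $\sigma(x,z):=\int_0^z(\bar f(x)-f(x,s))\,\dD s$; the compatibility $\sigma(x,1)=0$ follows tautologically from the definition of $\bar f$. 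Uniqueness is then direct by linearity: $f\equiv0$ gives $\bar f\equiv0$, hence $\phi\equiv0$, then $\tau\equiv0$, and finally $\partial_z\sigma=0$ with $\sigma|_{z=0}=0$ forces $\sigma\equiv0$.

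The main obstacle is verifying that the reconstructed $\sigma$ actually lies in $H^1(\Omega)$ at $\varepsilon=0$: the bounds $\sigma,\partial_z\sigma\in L^2(\Omega)$ are immediate from Cauchy--Schwarz applied to the antiderivative, but $\partial_x\sigma$ is an $x$-derivative of a $z$-antiderivative of $f$ and is not controlled by $\|f\|_{L^2(\Omega)}$ alone. I would handle this either by a density argument (establish the identity and a stability estimate on $\|\sigma\|_{H^1}$ for $f\in C^\infty(\bar\Omega)$ and pass to the limit), or, more cleanly, by bypassing the case split: recast the APFOS system as the minimization of the LS functional to be introduced next, and prove coercivity of that quadratic functional on $H^1_D\times H^1_N\times H^1(\Omega)$ uniformly in $\varepsilon\ge0$, so that Lax--Milgram yields the unique weak solution in one shot.
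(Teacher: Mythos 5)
Your argument is essentially the paper's own proof: the same split into $\varepsilon>0$ (equivalence with the scalar problem) and $\varepsilon=0$ (the constraint $\partial_z\phi=0$ forces $\phi=\phi(x)$, integration of the first equation in $z$ yields a one-dimensional two-point problem for $(\phi,\tau)$, and $\sigma$ is recovered by integrating $\partial_z\sigma=-\partial_x\tau-f$ from $z=0$), and your explicit formula for $\sigma$ coincides with the one in the paper. The one place you go beyond the paper is the ``main obstacle'' you flag, and you are right to flag it: the paper's proof simply writes the antiderivative formula and asserts the conclusion, without checking that $\partial_x\sigma\in L^2(\Omega)$. For $f\in L^2(\Omega)$ only, this genuinely fails --- take $f(x,z)=h(x)k(z)$ with $h\in L^2\setminus H^1$ and $k$ nonconstant, so that $\sigma(x,z)=h(x)\bigl(z\int_0^1k-\int_0^zk\bigr)$ has no weak $x$-derivative in $L^2$ --- and since the $\varepsilon=0$ system determines $\sigma$ uniquely, existence in $H^1_N$ cannot hold as stated. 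Neither of your proposed repairs saves the statement in its current form: the density argument requires a stability bound $\|\sigma\|_{H^1}\lesssim\|f\|_{L^2}$ that the counterexample rules out, and uniform coercivity of the LS functional on $H^1_D\times H^1_N\times H^1(\Omega)$ would imply the same false bound. The honest fixes are to assume $\partial_x f\in L^2(\Omega)$ or to weaken the space for $\sigma$ (e.g., $\sigma,\partial_z\sigma\in L^2$); with either modification your argument, like the paper's, goes through.
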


\begin{proof}
It is evident that  the  AP system~\eqref{eqs:AP2Daligned} is well-posed for any $\varepsilon>0$. Indeed,  the  AP system~\eqref{eqs:AP2Daligned} is equivalent to the equation~\eqref{eq:anisotropic_aligned2D} for $\varepsilon>0$.

Now we will focus on the case $\varepsilon=0$, where  the  system~\eqref{eq:AP2Daligned} becomes
\begin{equation}\label{eq:AP2Daligned_limit}
\left\{
\begin{array}{ll}
-\partial_x\tau - \partial_z\sigma = f, & \text{in } \Omega,\, \\
\partial_x\phi = \tau,& \text{in }\Omega,\,\\
\partial_z\phi =0,&\text{in }\Omega.
\end{array}
\right.
\end{equation}
The constraint $\partial_z\phi=0$ in $\Omega$ of~\eqref{eq:AP2Daligned_limit} implies $\phi$ only depending on $x$, {\it i.e.} $\phi(x,z)\equiv\phi(x)$, which also implies $\tau(x,z)\equiv\tau(x)$ thanks to $\partial_x\phi = \tau$.

Let $\bar{\Omega}=(0,1)$. Integrating the system~\eqref{eq:AP2Daligned_limit} on $z$-axis, we have
\begin{equation}\label{eq:AP2Daligned_mean}
\left\{
\begin{array}{ll}
-\partial_x\bar\tau = \bar{f}, & \text{in } \bar{\Omega},\, \\
\partial_x\bar\phi =\bar \tau,& \text{in }\bar{\Omega},\,\\
\bar\phi=0, & \text{on } \{0,1\},
\end{array}
\right.
\end{equation}
where the symbol bar means average value along $z$-axis of a function, {\it e.g.}
\begin{align*}
\bar{f}(x)= \int_0^1 f(x,z)dz.
\end{align*}
The system~\eqref{eq:AP2Daligned_mean} admits a unique couple of solutions $(\bar\phi,\bar\tau)\in \{v\in H^1(\bar{\Omega}):v(0)=v(1)=0\}\times H^1(\bar{\Omega})$~\cite{Evans2010}.

Finally, we reformulate  the first equation of~\eqref{eq:AP2Daligned_limit} as
\begin{align*}
\partial_z\sigma = -\partial_x \tau - f,
\end{align*}
which gives $\sigma$, thanks to boundary condition of $\sigma$,  by integration, {\it i.e.}
\begin{align*}
\sigma(x,z)= - z\,\partial_x\tau(x) - \int^z_0 f(x,s)ds.
\end{align*}
\end{proof}

\begin{remark}
If the boundary conditions at the end of $z$-axis of~\eqref{eq:AP2DalignedBC} is replaced by periodic boundary conditions, {\it i.e.}
\begin{equation}\label{eq:AP2DalignedBCperiodic}
\left\{
\begin{array}{ll}
\phi(0,z)=\phi(1,z)=0, & z\in\bar{\Omega},\, \\
\partial_z\phi(x,0) = \partial_z\phi(x,1),& x\in\bar{\Omega},\,\\
\sigma(x,0)=\sigma(x,1),& x\in\bar{\Omega},
\end{array}
\right.
\end{equation}
then the couple $(\phi,\tau)$ in the APFOS scheme remains well-defined, but $\sigma$ is not unique if $\varepsilon=0$. Indeed, we have
 \begin{align*}
\sigma(x,z)= - z\,\partial_x\tau(x) - \int^z_0 f(x,s)ds + c(x),
\end{align*}
for any $c(x)$ depending only on $x$. However, the partial derivative $\partial_z\sigma$ is unique since $\partial_z\sigma = -\partial_x \tau - f$.

\end{remark}

\paragraph{\bf General 2D case}
Let us consider now general 2D anisotropic elliptic equation:
\begin{equation}\label{eq:anisotropic_nonaligned2D}
\left\{
\begin{array}{ll}
-\Delta_\bot \phi - \frac{1}{\varepsilon}\Delta_{\|}\phi = f, & \text{in } \Omega,\, \\
\phi = g, & \text{on }\Gamma_D,\, \\
\nabla_\bot \phi \cdot \mathbf{n} + \frac{1}{\varepsilon}
\nabla_{\|}\phi \cdot \mathbf{n} = 0, &  \text{on }\Gamma_N.
\end{array}
\right.
\end{equation}
The magnetic field and its orthogonal vector are defined by
\begin{align*}
\mathbf{b} = (b_x, b_z)^T,\quad \mathbf{b}^{\bot}=(-b_z,b_x)^T.
\end{align*}
Similar to the aligned 2D case, the APFOS scheme for general 2D case is given by
\begin{subequations}\label{eqs:AP2Dnonaligned}
\begin{equation}\label{eq:AP2Dnonaligned}
\left\{
\begin{array}{ll}
-\nabla\cdot(\tau\mathbf{b}^\bot) - \nabla\cdot(\sigma\mathbf{b}) = f, & \text{in } \Omega,\, \\
\nabla_\bot\phi:=\mathbf{b}^\bot\otimes\mathbf{b}^{\bot}\nabla\phi = \tau\mathbf{b}^{\bot},& \text{in }\Omega,\,\\
\nabla_\|\phi := \mathbf{b}\otimes\mathbf{b}\nabla\phi= \varepsilon\sigma\mathbf{b},&\text{in }\Omega,
\end{array}
\right.
\end{equation}
with boundary conditions:
\begin{equation}\label{eq:AP2DnonalignedBC}
\left\{
\begin{array}{ll}
\phi=g, & \text{on } \Gamma_D,\, \\
\nabla_\bot \phi \cdot \mathbf{n} + \frac{1}{\varepsilon}
\nabla_{\|}\phi \cdot \mathbf{n} = 0,& \text{on }\Gamma_N,\,\\
\tau\mathbf{b}^\bot \cdot \mathbf{n} +
\sigma\mathbf{b} \cdot \mathbf{n} = 0,&\text{on }\Gamma_N.
\end{array}
\right.
\end{equation}
\end{subequations}
It is clear that, by taking $\mathbf{b}=(0,1)^T$, $\Omega=(0,1)\times(0,1)$,  $\Gamma_D=\{0,1\}\times(0,1)$, $\Gamma_N=(0,1)\times\{0,1\}$, the APFOS scheme~\eqref{eqs:AP2Dnonaligned} in general 2D case is nothing but the APFOS scheme~\eqref{eqs:AP2Daligned}.

The well-posedness of the APFOS scheme~\eqref{eqs:AP2Dnonaligned} in general 2D case is summarized in next theorem:
\begin{theorem}\label{thm:WP_AP_2D}
Suppose that the ends of magnetic field lines only cross on $\Gamma_N$ and  the boundary $\Gamma_D$ coincides with the magnetic field lines. Let
\begin{align*}
H^1_{D,g} = \{v\in H^1(\Omega):\, v|_{\Gamma_D} = g\}.
\end{align*}
Then for any $\varepsilon\geq0$, $f\in L^2(\Omega)$, we have a unique couple $(\phi,\sigma,\tau)\in H^1_{D,g}\times H^1(\Omega)\times H^1(\Omega)$ being the solution of the  APFOS scheme~\eqref{eqs:AP2Dnonaligned}.
\end{theorem}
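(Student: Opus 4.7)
My plan is to parallel the proof of Theorem~\ref{thm:APalinged} for the aligned case, using the geometric hypotheses to straighten the magnetic field and reduce the $\varepsilon=0$ limit to a one-dimensional problem along field lines.

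First I would dispose of the non-degenerate case $\varepsilon>0$ by showing that the APFOS system~\eqref{eqs:AP2Dnonaligned} is equivalent to the original anisotropic equation~\eqref{eq:anisotropic_nonaligned2D}. Writing $\mathbf{b}^\bot\otimes\mathbf{b}^\bot\nabla\phi = (\mathbf{b}^\bot\cdot\nabla\phi)\mathbf{b}^\bot$ and $\mathbf{b}\otimes\mathbf{b}\nabla\phi = (\mathbf{b}\cdot\nabla\phi)\mathbf{b}$ (since $|\mathbf{b}|=1$), the auxiliary equations reduce to $\tau=\mathbf{b}^\bot\cdot\nabla\phi$ and $\sigma=\tfrac{1}{\varepsilon}\mathbf{b}\cdot\nabla\phi$, whereupon the divergence equation and the Neumann condition on $\tau\mathbf{b}^\bot\cdot\mathbf{n}+\sigma\mathbf{b}\cdot\mathbf{n}$ coincide precisely with the original PDE and BC. Uniqueness of $\phi\in H^1_{D,g}$ then follows from classical elliptic theory, and $\sigma,\tau$ are determined uniquely as above.

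The main work is the degenerate case $\varepsilon=0$, where the third equation forces $\nabla_\|\phi=0$, i.e.\ $\phi$ is constant along every magnetic field line. Here I would introduce flux coordinates $(\xi,\eta)$ adapted to $\mathbf{b}$: $\xi$ labels a field line (taking values in an interval $\bar\Omega$) and $\eta$ parameterizes arclength along it. The hypotheses make this straightening well-defined and $\Omega$-covering: since $\Gamma_D$ consists of entire field lines, $\xi$ ranges over an interval whose endpoints correspond exactly to the two components of $\Gamma_D$; since every field line meets $\partial\Omega$ only at its two endpoints on $\Gamma_N$, the integration in $\eta$ runs between two well-defined points of $\Gamma_N$. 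In these coordinates $\phi=\phi(\xi)$ and, because $\tau\mathbf{b}^\bot=\nabla_\bot\phi=\nabla\phi-\nabla_\|\phi=\nabla\phi$, one also has $\tau=\tau(\xi)$. Averaging the divergence equation of~\eqref{eq:AP2Dnonaligned} along the field line (i.e.\ integrating in $\eta$) and using $\sigma\mathbf{b}\cdot\mathbf{n}=-\tau\mathbf{b}^\bot\cdot\mathbf{n}$ at the two $\Gamma_N$-endpoints collapses the $\partial_\eta\sigma$ contribution and yields a 1D two-point Sturm--Liouville problem in $\xi$ for $(\bar\phi,\bar\tau)$ with the Dirichlet data inherited from $g|_{\Gamma_D}$; standard ODE theory gives a unique solution in $H^1(\bar\Omega)^2$, which after lifting back to $\Omega$ provides the unique $(\phi,\tau)$. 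Finally, knowing $\tau(\xi)$, the divergence equation rearranges to $\partial_\eta\sigma = -\nabla\cdot(\tau\mathbf{b}^\bot)-f$, and integrating in $\eta$ from one $\Gamma_N$-endpoint (where the Neumann boundary condition pins down the constant of integration in $\sigma$) defines $\sigma$ uniquely in $H^1(\Omega)$.

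The main obstacle, in my view, is not the formal computation but the rigorous handling of the flux-coordinate change of variables: ensuring that $(\xi,\eta)\mapsto(x,z)$ is a bi-Lipschitz (ideally $C^1$) diffeomorphism so that $H^1$ regularity is preserved under the pull-back and push-forward, that the integration by $\eta$ of the divergence equation produces a coercive 1D operator even when $|\mathbf{b}^\bot|$ or the Jacobian of the straightening degenerate near $\Gamma_N$, and that the Neumann boundary condition $\tau\mathbf{b}^\bot\cdot\mathbf{n}+\sigma\mathbf{b}\cdot\mathbf{n}=0$ is compatible with field lines that may hit $\Gamma_N$ tangentially. The two geometric assumptions in the theorem statement are exactly what is needed to avoid these pathologies, and I would make essential use of them when justifying each of the reductions above.
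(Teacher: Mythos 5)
Your proposal follows essentially the same route as the paper's proof: straighten the field with a field-aligned change of coordinates, use $\partial_Z\phi=0$ to reduce $\phi$ to a function of the transverse variable, average the divergence equation along field lines while using the third boundary condition in~\eqref{eq:AP2DnonalignedBC} to evaluate the $\sigma$ endpoint terms, solve the resulting 1D two-point problem for $(\bar\phi,\bar\tau)$, and finally recover $\sigma$ by integrating along the field line as in Theorem~\ref{thm:APalinged}. The only slip is the claim that $\tau=\tau(\xi)$: since $\tau=\mathbf{b}^\bot\cdot\nabla\phi=\phi'(\xi)\,|\nabla\xi|$ it carries the scale factor $h_X(X,Z)$ and is generally not constant along field lines (the paper accordingly works with the averaged quantity $\bar T=\overline{h_Z\tau}$ and recovers $\tau=\tfrac{1}{h_X}\partial_X\bar\phi$ pointwise), but this is harmless for the argument.
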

\begin{proof}
We will only consider the case  $\varepsilon=0$, whereas the well-posedness for the case $\varepsilon>0$ is evident.

Let us make a change of variable as
\begin{align*}
x = x(X,Z), \quad z = z(X,Z),
\end{align*}
with scale factors $h_X(X,Z)$ and $h_Z(X,Z)$ respectively.
The $Z$-axis is aligned with the magnetic field, while $X$-axis is perpendicular with  the magnetic field, {\it i.e.}
\begin{align*}
\hat{X}=-b_z \hat{x} + b_x\hat{z},\,\hat{Z}=b_x \hat{x} + b_z\hat{z},
\end{align*}
where $\hat{X}$ and $\hat{Z}$ are unit basis in new curvilinear coordinates.
 For simplicity, in the sequel we suppose that $(X,Z)\in[X_0,X_{\max}]\times[Z_0,Z_{\max}]$. However, the general case can be proved similarly.

Thanks to the  change of variable,  the system~\eqref{eq:AP2Dnonaligned} become
\begin{equation}\label{eq:AP2Dnonaligned_CV}
\left\{
\begin{array}{ll}
-\partial_X(h_Z\tau) - \partial_Z(h_X\sigma) = h_X\,h_Z\,f, & \text{in } \Omega,\, \\
\partial_X\phi=h_X\tau=\frac{h_X}{h_Z}(h_Z\tau),& \text{in }\Omega,\,\\
\partial_Z\phi=\varepsilon h_Z\sigma=0,&\text{in }\Omega.
\end{array}
\right.
\end{equation}
Let us denote
\begin{align*}
T=h_Z\tau,\,\Sigma=h_X\sigma,\,F=h_X\,h_Z\,f.
\end{align*}
so the system~\eqref{eq:AP2Dnonaligned_CV} becomes
\begin{equation}\label{eq:AP2Dnonaligned_CV2}
\left\{
\begin{array}{ll}
-\partial_X T - \partial_Z\Sigma =F, & \text{in } \Omega,\, \\
\partial_X\phi=\frac{h_X}{h_Z}T,& \text{in }\Omega,\,\\
\partial_Z\phi=0,&\text{in }\Omega.
\end{array}
\right.
\end{equation}
Thanks to the third equation of~\eqref{eq:AP2Dnonaligned_CV2}, we have
\begin{align*}
\phi(X,Z)\equiv\phi(X).
\end{align*}
Integrating the first equation of~\eqref{eq:AP2Dnonaligned_CV2} along magnetic field lines $\mathbf{b}$ ($Z$-axis) yields
\begin{align*}
-\partial_X \bar{T}(X) - (\Sigma(X,Z_{\max}) - \Sigma(X,Z_0)) = \bar{F}(X),
\end{align*}
where the symbol bar means average value along $Z$-axis of a function, {\it e.g.}
\begin{align*}
\bar{F}(X) = \frac{1}{Z_{\max} - Z_0}\int_{Z_0}^{Z_{\max}}F(X,Z)dZ.
\end{align*}

Thanks to the third equation of~\eqref{eq:AP2DnonalignedBC}, we have
\begin{align*}
\sigma(X,Z_0) =- \bar{T} (X) \left(\frac{\mathbf{b}^\bot\cdot\mathbf{n}}{\mathbf{b}\cdot\mathbf{n}} \frac{h_X}{h_Z}\right)(X,Z_0),\quad
\sigma(X,Z_{\max}) = - \bar{T}(X)\left( \frac{\mathbf{b}^\bot\cdot\mathbf{n}}{\mathbf{b}\cdot\mathbf{n}}\frac{h_X}{h_Z}\right)(X,Z_{\max}).
\end{align*}
Moreover, thanks to the assumption on the magnetic field,  we have $g=\bar{g}$ on $ \{X_0,X_{\max}\}$. Thus
\begin{equation}\label{eq:AP2Dnonaligned_mean2}
\left\{
\begin{array}{ll}
-\partial_X\bar{T} + C(X)\bar{T}= \bar{f}, & \text{in } \bar{\Omega},\, \\
\bar{T} =\overline{\frac{h_Z}{h_X}}\partial\bar{\phi},& \text{in }\bar{\Omega},\,\\
\bar\phi=\bar{g}, & \text{on } \{X_0,X_{\max}\},
\end{array}
\right.
\end{equation}
where $\bar{\Omega}=[X_0,X_{\max}]$, $C(X) = \left( \frac{\mathbf{b}^\bot\cdot\mathbf{n}}{\mathbf{b}\cdot\mathbf{n}}\frac{h_X}{h_Z}\right)(X,Z_{\max}) -
\left(\frac{\mathbf{b}^\bot\cdot\mathbf{n}}{\mathbf{b}\cdot\mathbf{n}} \frac{h_X}{h_Z}\right)(X,Z_0).
$
The system~\eqref{eq:AP2Dnonaligned_mean2}  permits unique solution provided $C(X)$ is bounded in $[X_0,X_{\max}]$~\cite{Evans2010}, which is true thanks to the assumption of theorem.
Therefore, we have a unique couple $(\phi,\tau)=(\bar\phi,\frac{1}{h_X}\partial_X\bar{\phi})\in H^1_{D,g}\times H^1(\Omega)$.

Finally, following the last lines of the proof of Theorem~\ref{thm:APalinged}, we deduce the uniqueness of $\sigma$, thus we achieve the proof.
\end{proof}

We remark that, on the one hand, for the case with closed magnetic field, that is periodic boundary in $Z$-axis, we have
\begin{align*}
\Sigma(X,Z_0) = \Sigma(X,Z_{\max}).
\end{align*}
Then, following the same arguments in the proof of Theorem~\ref{thm:WP_AP_2D}, we have again a unique couple $(\phi,\tau)\in H^1_{D,g}\times H^1(\Omega)$.
On the other hand, for the same reason as that in the aligned case, we loss the uniqueness of the variable $ \sigma$  for the case $\varepsilon=0$ with closed magnetic field.
\paragraph{\bf General 3D case}
The general 3D  anisotropic elliptic equation takes the same form as in~\eqref{eq:anisotropic_nonaligned2D}. However, the magnetic field and its orthogonal vectors are defined differently
\begin{align*}
\mathbf{b}=(b_x,b_y,b_z)^T,\quad \mathbf{b}^{\bot,1} = \frac{ \mathbf{B}^{\bot,1}}{|\mathbf{B}^{\bot,1}|},\quad  \mathbf{b}^{\bot,2} = \frac{ \mathbf{B}^{\bot,2}}{|\mathbf{B}^{\bot,2}|},
\end{align*}
with
\begin{align*}
\mathbf{B}^{\bot,1} = \mathbf{b} \times (0,0,1)^T,\quad \mathbf{B}^{\bot,2} = \mathbf{b}\times\mathbf{B}^{\bot,1}.
\end{align*}
Therefore, we need introduce correspondingly three auxiliary variables.
The APFOS scheme for general 3D case is given by
\begin{subequations}\label{eqs:AP3Dnonaligned}
\begin{equation}\label{eq:AP3Dnonaligned}
\left\{
\begin{array}{ll}
-\nabla\cdot(\tau\mathbf{b}^{\bot,1} + \chi\mathbf{b}^{\bot,2}) - \nabla\cdot(\sigma\mathbf{b}) = f, & \text{in } \Omega,\, \\
\nabla\phi\cdot\mathbf{b}^{\bot,1} = \tau,& \text{in }\Omega,\,\\
\nabla\phi\cdot\mathbf{b}^{\bot,2} = \chi,& \text{in }\Omega,\,\\
\nabla\phi\cdot\mathbf{b}= \varepsilon\sigma,&\text{in }\Omega,
\end{array}
\right.
\end{equation}
with boundary conditions:
\begin{equation}\label{eq:AP3DnonalignedBC}
\left\{
\begin{array}{ll}
\phi=g, & \text{on } \Gamma_D,\, \\
\nabla_\bot \phi \cdot \mathbf{n} + \frac{1}{\varepsilon}
\nabla_{\|}\phi \cdot \mathbf{n} = 0,& \text{on }\Gamma_N,\,\\
(\tau\mathbf{b}^{\bot,1} + \chi\mathbf{b}^{\bot,2}) \cdot \mathbf{n} +
\sigma\mathbf{b} \cdot \mathbf{n} = 0,&\text{on }\Gamma_N.
\end{array}
\right.
\end{equation}
\end{subequations}

By following the same arguments shown in general 2D case, we could deduce that the  APFOS scheme~\eqref{eqs:AP3Dnonaligned} in general 3D case is also well-posed.

\subsection{APFOS least-squares formulations}\label{sec:continuous_LS}
This part is devoted to introducing APFOS least-squares formulations for APFOS scheme~\eqref{eqs:AP2Dnonaligned} and~\eqref{eqs:AP3Dnonaligned}.

For 2D case, the APFOS-LS formulation is to find $(\phi,\tau,\sigma)\in (H^1(\Omega))^3$ such that
\begin{subequations}\label{eqs:continuous_APFOSLSFormulation_anisotropic}
\begin{align}\label{eq:continuous_APFOSLSFormulation_anisotropic}
\mathcal{G}(\phi,\tau,\sigma ;\mathbf{f}) = \min_{(\psi,\xi,\zeta)\in (H^1(\Omega))^3}\mathcal{G}(\psi,\xi,\zeta ;\mathbf{f}),
\end{align}
where $\mathbf{f}= (f,g)$,
and the APFOS-LS functional with proper scales is
\begin{equation}\label{eq:continuous_APFOSLSF_anisotropic}
\begin{array}{ll}
\mathcal{G}(\psi,\xi,\zeta ;\mathbf{f}) = & \|\nabla\cdot(\xi\mathbf{b}^\bot) + \nabla\cdot(\zeta\mathbf{b}) + f\|^2_{0,\Omega} + \|\nabla\psi\cdot\mathbf{b}^\bot - \xi\|^2_{0,\Omega} +  \|\nabla\psi\cdot\mathbf{b} - \varepsilon\zeta\|^2_{0,\Omega} \\[3mm]
& + \|\psi- g\|^2_{1/2,\Gamma_D} + \|\varepsilon\nabla_\bot \psi \cdot \mathbf{n} +
\nabla_{\|}\psi \cdot \mathbf{n}\|^2_{-1/2,\Gamma_N}
+ \|\xi\mathbf{b}^\bot \cdot \mathbf{n} +
\zeta\mathbf{b} \cdot \mathbf{n}\|^2_{-1/2,\Gamma_N},
\end{array}
\end{equation}
\end{subequations}
for all $(\psi,\xi,\zeta)\in (H^1(\Omega))^3$.
There exists $(\phi,\tau,\sigma)\in (H^1(\Omega))^3$, which can minimize APFOS-LS functional \eqref{eq:continuous_APFOSLSF_anisotropic}. Moreover, $(\phi,\tau)$ is also the unique solution to the APFOS scheme~\eqref{eqs:AP2Dnonaligned} such that $(\phi,\tau)\in H^1_{D,g}\times H^1(\Omega)$.

Similarly, for 3D APFOS scheme~\eqref{eqs:AP3Dnonaligned},
 the APFOS-LS formulation is to find $(\phi,\tau,\chi,\sigma)\in (H^1(\Omega))^4$ such that
 \begin{subequations}
\begin{align}
\mathcal{G}(\phi,\tau,\chi,\sigma ;\mathbf{f}) = \min_{(\psi,\xi,\kappa,\zeta)\in (H^1(\Omega))^4}\mathcal{G}(\psi,\xi,\kappa,\zeta ;\mathbf{f}).
\end{align}
where the APFOS-LS functional is as follows
\begin{equation}
\begin{array}{ll}
\mathcal{G}(\psi,\xi,\kappa,\zeta ;\mathbf{f}) = & \|\nabla\cdot(\xi\mathbf{b}^{\bot,1} + \kappa\mathbf{b}^{\bot,2} ) + \nabla\cdot(\zeta\mathbf{b}) + f\|^2_{0,\Omega} + \|\nabla\psi\cdot\mathbf{b}^{\bot,1} - \xi\|^2_{0,\Omega}\\[3mm]
& +   \|\nabla\psi\cdot\mathbf{b}^{\bot,2} - \kappa\|^2_{0,\Omega} + \|\nabla\psi\cdot\mathbf{b} - \varepsilon\zeta\|^2_{0,\Omega} + \|\psi- g\|^2_{1/2,\Gamma_D}
\\[3mm]
& + \|\varepsilon\nabla_\bot \psi \cdot \mathbf{n} +
\nabla_{\|}\psi \cdot \mathbf{n}\|^2_{-1/2,\Gamma_N} + \|\xi\mathbf{b}^{\bot,1} \cdot \mathbf{n} +\kappa\mathbf{b}^{\bot,2} \cdot \mathbf{n} +
\zeta\mathbf{b} \cdot \mathbf{n}\|^2_{-1/2,\Gamma_N},
\end{array}
\end{equation}
\end{subequations}
for all $(\psi,\xi,\kappa,\zeta)\in (H^1(\Omega))^4$.

\section{Deep neural network for asymptotic preserving scheme based on first order system} \label{sec:DNNFOSAP}

\subsection{Deep neural network architecture}
We will briefly describe the deep neural network (DNN) structure used in this paper. For a more detailed review of DNN structure, we refer to~\cite{cai2020deep} and references therein.

Simply to say, a deep neural network defines a nonlinear function
\begin{align*}
\mathcal{N}:\mathbf{x}\in\mathbb{R}^d\to \mathbf{y}=\mathcal{N}(\mathbf{x})\in\mathbb{R}^c,
\end{align*}
where $d$ and $c$ are dimensions of input $\mathbf{x}$ and output $\mathbf{y}$, respectively. $\mathcal{N}(\mathbf{x})$ is a composite function of many different layers of functions as
\begin{align*}
\mathcal{N}(\mathbf{x}) =\mathcal{N}^{(L)}\circ\cdots\circ\mathcal{N}^{(2)}\circ\mathcal{N}^{(1)}(\mathbf{x}),
\end{align*}
where $L$ is the number of layers. The layer function $\mathcal{N}^{(l)}:\mathbb{R}^{n_{l-1}}\to\mathbb{R}^{n_l}$ is defined by
\begin{align*}
\mathcal{N}^{(l)}(\mathbf{x}^{(l-1)})=\varphi^{(l)}(\mathcal{W}^{(l)}\mathbf{x}^{(l-1)}+\mathcal{B}^{(l)}),\quad l=1,\dots,L-1,
\end{align*}
where $\mathbf{x}^{(l-1)}\in\mathbb{R}^{n_{l-1}}$ is the output of layer $l-1$, it is also the input of layer $l$.
$\mathcal{W}^{(l)}\in\mathbb{R}^{n_l\times n_{l-1}}$, $\mathcal{B}^{(l)}\in\mathbb{R}^{n_l}$ are called network weights and biases respectively. The application $\varphi^{(l)}:\mathbb{R}\to\mathbb{R}$ is an activation function, which is the soul of DNN, it enables the multi-layer network structure to learn complex things, complex data, and to represent the nonlinear and complex arbitrary function mapping between input and output. At the same time, the activation function can also perform data normalization to map the input data to a certain range, on the one hand, it prevents the risk of data overflow; on the other hand, since each input data of the network often has different physical meanings and different dimensions, data normalization makes all components change within a common range, so that the input components are given equal importance at the beginning of network training. In this paper, the hyperbolic tangent function is taken as the activation function
\begin{align*}
\varphi(x)=\tanh(x),\,x\in\mathbb{R}.
\end{align*}
The function $\tanh$ is very smooth, thus it will not infect the regularity of solution. Notice that the image  of $\tanh$ is $[-1,1]$, it is better to rescale the input of the first layer to $[-1,1]^d$. Finally, for the last layer $L$, there is no need to use activation function, hence
\begin{align*}
\mathcal{N}^{(L)}(\mathbf{x}^{(L-1)}) = \mathcal{W}^{(L)}\mathbf{x}^{(L-1)}+\mathcal{B}^{(L)}.
\end{align*}
It is clear that the DNN function $\mathcal{N}$ is uniquely determined by weights and biases
\begin{align*}
\mathcal{W}^{(l)},\, \mathcal{B}^{(l)},\, l=1,\dots,L,
\end{align*}
which have totally $N=\sum\limits_{l=1}^L n_l\times(n_{l-1}+1)$ parameters. Particularly, for anisotropic problem, we have $n_L=1$; for APFOS scheme in 2D case, $n_L=3$; for APFOS scheme in 3D case, $n_L=4$.
The DNN function $\mathcal{N}$  will be regarded as an approximation for resolution of the least-squares formulations in next part.

\subsection{Deep neural network for APFOS least-squares formulations}\label{sec:discrete_LS}
The idea of deep neural network for least-squares (LS) formulations is to substitute the continuous  LS formulations by their discrete approximations, where the unknowns are DNN function  $\mathcal{N}$. Then minimizing the discrete  LS formulations  gives the optimal  weights and biases.

Let us consider 2D APFOS-LS formulation, in which the approximations $\hat{\phi}(\mathbf{x},\Theta),\hat{\tau}(\mathbf{x},\Theta),\hat{\sigma}(\mathbf{x},\Theta)$ are also determined by the location $\mathbf{x}\in\mathbb{R}^2$ and  the  weights and biases  denoted as $\Theta\in\mathbb{R}^N$. The discrete APFOS-LS formulation reads
\begin{subequations}\label{eqs:discrete_LSF_AP2D}
\begin{align}
\hat{\mathcal{G}}[\hat{\phi},\hat{\tau},\hat{\sigma};\mathbf{f}](\Theta) = \min_{\tilde{\Theta}\in\mathbb{R}^N}\hat{\mathcal{G}}[\hat{\psi},\hat{\xi},\hat{\zeta} ;\mathbf{f}](\tilde{\Theta}),
\end{align}
and the following discrete APFOS-LS functional is employed
\begin{equation}\label{eq:discrete_LSF_AP2D}
\begin{array}{ll}
&\hat{\mathcal{G}}[\hat{\psi},\hat{\xi},\hat{\zeta};\mathbf{f}](\tilde{\Theta}) \\[3mm]
= &\frac{1}{N_f}\sum\limits_{i=1}^{N_f}\left(\nabla\cdot(\hat{\xi}(\mathbf{x}_i,\tilde{\Theta})\mathbf{b}^\bot(\mathbf{x}_i)) + \nabla\cdot(\hat{\zeta}(\mathbf{x}_i,\tilde{\Theta})\mathbf{b}(\mathbf{x}_i)) + f(\mathbf{x}_i)\right)^2 \\[3mm]
&+ \frac{1}{N_f}\sum\limits_{i=1}^{N_f}\left(\left(\nabla\hat{\psi}(\mathbf{x}_i,\tilde{\Theta})\cdot\mathbf{b}^\bot(\mathbf{x}_i) - \hat{\xi}(\mathbf{x}_i,\tilde{\Theta})\right)^2 + \left(\nabla\hat{\psi}(\mathbf{x}_i,\tilde{\Theta})\cdot\mathbf{b}(\mathbf{x}_i) - \varepsilon\hat{\zeta}(\mathbf{x}_i,\tilde{\Theta})\right)^2\right) \\[3mm]
& +\frac{\beta_D}{N_d}\sum\limits_{j=1}^{N_d}\left(\hat{\psi}(\mathbf{x}_j,\tilde{\Theta})- g(\mathbf{x}_j)\right)^2 +\frac{\beta_N}{N_n}\sum\limits_{k=1}^{N_n}\left(\varepsilon\nabla_\bot \hat{\psi}(\mathbf{x}_k,\tilde{\Theta}) \cdot \mathbf{n}(\mathbf{x}_k) +
\nabla_{\|}\hat{\psi}(\mathbf{x}_k,\tilde{\Theta}) \cdot \mathbf{n}(\mathbf{x}_k)\right)^2 \\[3mm]
&+\frac{\beta_N}{N_n}\sum\limits_{k=1}^{N_n}\left(\hat{\xi}(\mathbf{x}_i,\tilde{\Theta})\mathbf{b}^\bot(\mathbf{x}_k) \cdot \mathbf{n}(\mathbf{x}_k)  +
\hat{\zeta}(\mathbf{x}_k,\tilde{\Theta})\mathbf{b}(\mathbf{x}_k) \cdot \mathbf{n}(\mathbf{x}_k)\right)^2.
\end{array}
\end{equation}
\end{subequations}
where $N_f$, $N_d$, $N_n$ are numbers of collocation points in $\Omega$, $\Gamma_D$, $\Gamma_N$ respectively. The collocation points can be uniformly sampled in the domain or in some random way, which will be specified in Section~\ref{sec:numresults}.
 The norms on the boundaries are approximated by weighted $L^2$ norms, and
 $\beta_D$, $\beta_N$ represent weights in the discrete APFOS-LS functionals.
 Now minimizing~\eqref{eq:discrete_LSF_AP2D} with respect to $\Theta$ yields an optimal DNN structure solution to the APFOS scheme~\eqref{eqs:AP2Dnonaligned}.

Similarly, for 3D case, the discrete APFOS-LS formulation reads
\begin{subequations}\label{eqs:discreteAPFOS-LSfunctional}
\begin{align}
\hat{\mathcal{G}}[\hat{\phi},\hat{\tau},\hat{\chi},\hat{\sigma};\mathbf{f}]({\Theta}) = \min_{\tilde{\Theta}\in\mathbb{R}^N}\hat{\mathcal{G}}[\hat{\psi},\hat{\xi},\hat{\kappa},\hat{\zeta} ;\mathbf{f}](\tilde{\Theta}),
\end{align}
where the discrete APFOS-LS functional is
\begin{equation}\label{eq:discreteAPFOS-LSfunctional}
\begin{array}{ll}
&\hat{\mathcal{G}}[\hat{\psi},\hat{\xi},\hat{\kappa},\hat{\zeta};\mathbf{f}](\tilde{\Theta}) \\[3mm]
= &\frac{1}{N_f}\sum\limits_{i=1}^{N_f}\left(\nabla\cdot(\hat{\xi}(\mathbf{x}_i,\tilde{\Theta})\mathbf{b}^{\bot,1}(\mathbf{x}_i)) +\nabla\cdot(\hat{\kappa}(\mathbf{x}_i,\tilde{\Theta})\mathbf{b}^{\bot,2}(\mathbf{x}_i)) + \nabla\cdot(\hat{\zeta}(\mathbf{x}_i,\tilde{\Theta})\mathbf{b}(\mathbf{x}_i)) + f(\mathbf{x}_i)\right)^2 \\[3mm]
&+ \frac{1}{N_f}\sum\limits_{i=1}^{N_f}\left(\left(\nabla\hat{\psi}(\mathbf{x}_i,\tilde{\Theta})\cdot\mathbf{b}^{\bot,1}(\mathbf{x}_i) - \hat{\xi}(\mathbf{x}_i,\tilde{\Theta})\right)^2 + \left(\nabla\hat{\psi}(\mathbf{x}_i,\tilde{\Theta})\cdot\mathbf{b}^{\bot,2}(\mathbf{x}_i) - \hat{\kappa}(\mathbf{x}_i,\tilde{\Theta})\right)^2\right)\\[3mm]
& + \frac{1}{N_f}\sum\limits_{i=1}^{N_f} \left(\nabla\hat{\psi}(\mathbf{x}_i,\tilde{\Theta})\cdot\mathbf{b}(\mathbf{x}_i) - \varepsilon\hat{\zeta}(\mathbf{x}_i,\tilde{\Theta})\right)^2 \\[3mm]
& +\frac{\beta_D}{N_d}\sum\limits_{j=1}^{N_d}\left(\hat{\psi}(\mathbf{x}_j,\tilde{\Theta})- g(\mathbf{x}_j)\right)^2 +\frac{\beta_N}{N_n}\sum\limits_{k=1}^{N_n}\left(\varepsilon\nabla_\bot \hat{\psi}(\mathbf{x}_k,\tilde{\Theta}) \cdot \mathbf{n}(\mathbf{x}_k) +
\nabla_{\|}\hat{\psi}(\mathbf{x}_k,\tilde{\Theta}) \cdot \mathbf{n}(\mathbf{x}_k)\right)^2 \\[3mm]
&+\frac{\beta_N}{N_n}\sum\limits_{k=1}^{N_n}\left(\hat{\xi}(\mathbf{x}_i,\tilde{\Theta})\mathbf{b}^{\bot,1}(\mathbf{x}_k) \cdot \mathbf{n}(\mathbf{x}_k)  + \hat{\kappa}(\mathbf{x}_i,\tilde{\Theta})\mathbf{b}^{\bot,2}(\mathbf{x}_k) \cdot \mathbf{n}(\mathbf{x}_k)  +
\hat{\zeta}(\mathbf{x}_k,\tilde{\Theta})\mathbf{b}(\mathbf{x}_k) \cdot \mathbf{n}(\mathbf{x}_k)\right)^2.
\end{array}
\end{equation}
\end{subequations}

\section{Data-driven discovery in anisotropic elliptic equation} \label{sec:parameteresti}
As a direct application of APFOS scheme, in this part we focus on data-driven discovery in anisotropic elliptic equation. More specifically, we are interested  in identification of  the anisotropy strength $\varepsilon$. Only 2D cases are considered here, the ones for 3D can be extended similarly.

Let us point out that data-driven discovery can be regarded as inverse problem, that means to use some \emph{priori} information of solutions, called observation, to estimate parameters of equations. The way of estimating parameters can be made by adding those \emph{priori} information of solutions to LS functionals introduced in Section~\ref{sec:continuous_LS} and Section~\ref{sec:discrete_LS}, then we can find optimal parameter estimations by solving the new LS formulations.
\subsection{Identification of the anisotropy strength $\varepsilon$}

Suppose that $\phi_e\in H^2_D(\Omega)$ is the solution of the anisotropic equation~\eqref{eq:anisotropic_nonaligned2D}. For 2D APFOS scheme~\eqref{eqs:AP2Dnonaligned},
the identification APFOS-LS formulation is to find $\varepsilon\in \mathbb{R}^+$ and $(\phi,\tau,\sigma)\in (H^1(\Omega))^3$ such that
\begin{subequations}
\begin{align}
\mathcal{G}(\varepsilon,\phi,\tau,\sigma ;\mathbf{f}) = \min_{\tilde{\varepsilon}\in \mathbb{R}^+,(\psi,\xi,\zeta)\in (H^1(\Omega))^3}\mathcal{G}(\tilde{\varepsilon},\psi,\xi,\zeta ;\mathbf{f}),
\end{align}
where  $\mathbf{f}= (f,g)$, the APFOS-LS functional is as follows
\begin{equation}\label{eq:iden_APFOS-LSfct}
\begin{array}{ll}
\mathcal{G}(\tilde{\varepsilon},\psi,\xi,\zeta ;\mathbf{f}) = & \|\psi - \phi_e\|^2_{0,\Omega} \\[3mm]
& + \|\nabla\cdot(\xi\mathbf{b}^\bot) + \nabla\cdot(\zeta\mathbf{b}) + f\|^2_{0,\Omega} + \|\nabla\psi\cdot\mathbf{b}^\bot - \xi\|^2_{0,\Omega} +  \|\nabla\psi\cdot\mathbf{b} - \tilde{\varepsilon}\zeta\|^2_{0,\Omega} \\[3mm]
& +  \|\tilde{\varepsilon}\nabla_\bot \psi \cdot \mathbf{n} +
\nabla_{\|}\psi \cdot \mathbf{n}\|^2_{-1/2,\Gamma_N}
+ \|\xi\mathbf{b}^\bot \cdot \mathbf{n} +
\zeta\mathbf{b} \cdot \mathbf{n}\|^2_{-1/2,\Gamma_N},
\end{array}
\end{equation}
\end{subequations}
for all $\tilde{\varepsilon}\in\mathbb{R}^+$ and $(\psi,\xi,\zeta)\in (H^1(\Omega))^3$.

Observing from~\eqref{eq:iden_APFOS-LSfct}, we can find two main differences from~\eqref{eq:continuous_APFOSLSF_anisotropic}. The first difference is that the Dirichlet boundary condition constraint does not exist in~\eqref{eq:iden_APFOS-LSfct}, instead we add the observation constraint $ \|\psi - \phi_e\|^2_{0,\Omega}$. The second one is that output of optimization problem consists of both a DNN and a parameter $\varepsilon$.

\subsection{Numerical discretization}

In this part, we will propose discrete functionals to  the identification APFOS-LS functional~\eqref{eq:iden_APFOS-LSfct}, which can also be made by adding \emph{priori} information to~\eqref{eq:discrete_LSF_AP2D}. Different from continuous case, there are two aspects should be taken into account: on the one hand, the solution $\phi_e$ is only sampled on certain points, and they can be noise free or with noise; on the other hand, the parameter $\varepsilon$ should be guaranteed  always non-negative.

Following the same principle, the corresponding identification APFOS-LS formulation reads
\begin{subequations}\label{eqs:ident_discrete_LSF_AP2D}
\begin{align}\label{eq:ident_discrete_LSFormulation_APFOS2D}
\hat{\mathcal{G}}[\hat{\phi},\hat{\tau},\hat{\sigma};\mathbf{f}](\varepsilon^*,\Theta) = \min_{\tilde{\varepsilon}^*\in\mathbb{R},\tilde{\Theta}\in\mathbb{R}^N}\hat{\mathcal{G}}[\hat{\psi},\hat{\xi},\hat{\zeta} ;\mathbf{f}](\tilde{\varepsilon}^*,\tilde{\Theta}),
\end{align}
where the  corresponding discrete  functional to~\eqref{eq:iden_APFOS-LSfct} is
\begin{equation}\label{eq:ident_discrete_LSF_AP2D}
\begin{array}{ll}
&\hat{\mathcal{G}}[\hat{\psi},\hat{\xi},\hat{\zeta} ;\mathbf{f}](\tilde{\varepsilon}^*,\tilde{\Theta}) \\[3mm]
= & \frac{\beta_{e}}{N_f}\sum\limits_{i=1}^{N_f}\left(\hat{\psi}(\mathbf{x}_i,\tilde{\Theta}) - \phi_e(\mathbf{x}_i)\right)^2 \\[3mm]
%
%
&+ \frac{\beta_{f_{1}}}{N_f}\sum\limits_{i=1}^{N_f}\left(\nabla\cdot(\hat{\xi}(\mathbf{x}_i,\tilde{\Theta})\mathbf{b}^\bot(\mathbf{x}_i)) + \nabla\cdot(\hat{\zeta}(\mathbf{x}_i,\tilde{\Theta})\mathbf{b}(\mathbf{x}_i)) + f(\mathbf{x}_i)\right)^2 \\[3mm]
&+ \frac{\beta_{f_{2}}}{N_f}\sum\limits_{i=1}^{N_f}\left(\nabla\hat{\psi}(\mathbf{x}_i,\tilde{\Theta})\cdot\mathbf{b}^\bot(\mathbf{x}_i) - \hat{\xi}(\mathbf{x}_i,\tilde{\Theta})\right)^2 + \frac{\beta_{f_{3}}}{N_f}\sum\limits_{i=1}^{N_f}\left(\nabla\hat{\psi}(\mathbf{x}_i,\tilde{\Theta})\cdot\mathbf{b}(\mathbf{x}_i) - \exp(\tilde{\varepsilon}^*)\hat{\zeta}(\mathbf{x}_i,\tilde{\Theta})\right)^2 \\[3mm]
&+\frac{\beta_{N_{1}}}{N_n}\sum\limits_{k=1}^{N_n}\left(\exp(\tilde{\varepsilon}^*)\nabla_\bot \hat{\psi}(\mathbf{x}_k,\tilde{\Theta}) \cdot \mathbf{n}(\mathbf{x}_k) +
\nabla_{\|}\hat{\psi}(\mathbf{x}_k,\tilde{\Theta}) \cdot \mathbf{n}(\mathbf{x}_k)\right)^2 \\[3mm]
&+\frac{\beta_{N_{2}}}{N_n}\sum\limits_{k=1}^{N_n}\left(\hat{\xi}(\mathbf{x}_i,\tilde{\Theta})\mathbf{b}^\bot(\mathbf{x}_k) \cdot \mathbf{n}(\mathbf{x}_k)  +
\hat{\zeta}(\mathbf{x}_k,\tilde{\Theta})\mathbf{b}(\mathbf{x}_k) \cdot \mathbf{n}(\mathbf{x}_k)\right)^2,
\end{array}
\end{equation}
\end{subequations}
where the parameters $N_f$, $N_d$, $N_n$ have same meanings as stated in Section~\ref{sec:discrete_LS}. For the identification problem, we merge the Dirichlet boundary term to the observation term. Furthermore, the importance of each term needs to be changed. The weights are redefined as $\beta_{e}$, $\beta_{f_{1}}$, $\beta_{f_{2}}$, $\beta_{f_{3}}$, $\beta_{N_{1}}$, $\beta_{N_{2}}$. It is important to notice that the identification APFOS-LS formulation~\eqref{eq:ident_discrete_LSFormulation_APFOS2D} gives a scalar number $\varepsilon^*$ and a deep neural network at the same time. Moreover, we do not estimate directly the anisotropic strength $\varepsilon$, instead we estimate $\varepsilon^*$ such that $\varepsilon=\exp(\varepsilon^*)$. This change of variable can always make sure the estimation $\varepsilon$ being non-negative. Once the estimation $\varepsilon^*$ is obtained from~\eqref{eq:ident_discrete_LSFormulation_APFOS2D}, we can recover $\varepsilon$ by $\varepsilon=\exp(\varepsilon^*)$.

The non-AP LS scheme for identifying the anisotropic strength $\varepsilon$ is stated in~\ref{sec:nonAPLS_discrete}. We may expect that for $\varepsilon\approx1$, the identification scheme~\eqref{eqs:ident_discrete_LSFormulation_anisotropic} and the AP identification scheme~\eqref{eqs:ident_discrete_LSF_AP2D} provide similar results. While for  $\varepsilon\ll1$, the the identification scheme~\eqref{eqs:ident_discrete_LSFormulation_anisotropic} will no longer work since its corresponding forward problem is ill-posed at the limit $\varepsilon\to0$. In  contrast,  the AP identification scheme~\eqref{eqs:ident_discrete_LSF_AP2D} will still provide accurate estimate of $\varepsilon$ thanks to the well-posedness of APFOS scheme.

\section{Numerical investigation} \label{sec:numresults}

\subsection{Setup for anisotropic elliptic equation}

Two frameworks  are proposed to investigate efficiency of the APFOS scheme.
\paragraph{Setup I} A 2D test case is  manufactured thanks to definition of the $B$-field
\begin{equation}\label{eq:b2d}
\mathbf{B} = \begin{pmatrix}
m\pi\theta(x^2-x)\sin(m\pi z)\\
\pi + \theta (2x - 1) \cos(m \pi z)
\end{pmatrix}
\end{equation}
the associated solution $\phi$ is defined as
\begin{subequations}
\begin{equation}
\phi = \phi_0 + \varepsilon\cos(2\pi z)\sin(\pi x),
\end{equation}
\begin{equation}
\phi_0 = \sin(\omega(\pi x+\theta(x^2-x)\cos(m\pi z))).
\end{equation}
\end{subequations}
The magnetic field $\textbf{b}$ is represented as the normalization of \emph{B}-field $\textbf{B}$, namely $\textbf{b}=\frac{\textbf{B}}{|\textbf{B}|}$.

\paragraph{Setup II} In a 3D case, the $B$-field is defined as
\begin{equation}\label{eq:b3d}
\mathbf{B} = \begin{pmatrix}
2\,\pi\, \left( {x}^{2}-x \right) \sin \left( 3\,\pi\,y \right) \sin
 \left( \pi\,z \right)
\\
\pi\, \left( {x}^{2}-x \right) \sin \left( 3\,\pi\,y \right) \sin
 \left( \pi\,z \right)
\\
2\,\pi+2\, \left( 2\,x-1 \right) \sin \left( 3\,\pi\,y \right) \cos
 \left( \pi\,z \right) +3\,\pi\, \left( {x}^{2}-x \right) \cos \left(
3\,\pi\,y \right) \cos \left( \pi\,z \right)
\end{pmatrix}
\end{equation}
and the solution $\phi$ is given by
\begin{subequations}
\begin{equation}
\phi = \phi_0 + \varepsilon\cos \left( 2\,\pi\,z \right) \sin \left( \pi\,x \right) \sin \left(
\pi\,y \right)
,
\end{equation}
\begin{equation}
\phi_0 =\sin \left( \omega\, \left( \pi\,x+ \left( {x}^{2}-x \right) \sin
 \left( 3\,\pi\,y \right) \cos \left( \pi\,z \right)  \right)
 \right)
.
\end{equation}
\end{subequations}

For both cases, the right-hand side $f$ is analytically computed via
\begin{equation}
f = -\Delta_\bot \phi - \frac{1}{\varepsilon}\Delta_\|\phi.
\end{equation}

To verify the efficiency of the proposed deep neural network-based APFOS scheme (APFOS-DNN), three groups of test cases by different parameter settings are carried out in 2D and 3D frameworks, respectively. In real applications, these model parameters are of physical sense, such as the direction of the magnetic field. In this study, they are set by the following way:

\paragraph{Case 1} $(\theta=0,m=1,\omega=2)$;

\paragraph{Case 2} $(\theta=2,m=1,\omega=2)$;

\paragraph{Case 3} $(\theta=10,m=2,\omega=2)$. \\

In the forward modeling, anisotropic strength $\varepsilon$ is set to $1$, $1e-02$ and $1e-20$ at each \emph{Case} for testing the ability of improving the well-posedness. Fig. \ref{fig:1} illustrates the exact solution $\phi_{e}$ for \emph{Setup I} with $\varepsilon=1e-20$, where $\phi_{e}$ is almost constant along with the direction of the magnetic field $\mathbf{b}$. More precisely, for \emph{Case 1},  the exact solution $\phi_{e}(x,z)\approx \phi_{e}(x)$, while for \emph{Case 2} and \emph{Case 3}, after coordinate transformation, $\phi_{e}(X,Z)\approx \phi_{e}(X)$. Fig. \ref{fig:2} shows the exact solution $\phi_{e}$ for \emph{Setup II} with $\varepsilon=1e-20$.

To describe the numerical performance, the following three types of errors are adopted

\begin{equation}\label{eq:L1}
E_{1} = \frac{\|\phi-\phi_{e}\|_{1}}{\|\phi_{e}\|_{1}}
\end{equation}

\begin{equation}\label{eq:L2}
E_{2} = \frac{\|\phi-\phi_{e}\|_{2}}{\|\phi_{e}\|_{2}}
\end{equation}

\begin{equation}\label{eq:Linf}
E_{\infty} = \frac{\|\phi-\phi_{e}\|_{\infty}}{\|\phi_{e}\|_{\infty}}
\end{equation}
where $\phi_{e}$ denotes the exact solution.

Detailed description on the numerical implementation of the APFOS-DNN method are available in Alg. \ref{alg:1} and \ref{alg:2}, forward modeling and identification problem, respectively.

\begin{algorithm}[htb]\begin{spacing}{1.2}
\caption{APFOS-DNN method for forward modeling}\label{alg:1}
\begin{flushleft}
\hspace*{0.02in}{\bf Architecture:}
Number of discretized grid, hidden layer, neuron, and collocation points of boundary condition and model; activation function; initial weight and bias $\Theta^{0}$; stopping criterion $\mathrm{tol}$ and iteration number $\mathrm{Iter}$.\\
\hspace*{0.02in}{\bf Input of network:}
Location of grid points $\mathbf{x}\in \mathbb{R}^{2}$ or $\mathbf{x}\in\mathbb{R}^{3}$.\\
\hspace*{0.02in}{\bf Output of network:}
Prediction of solution $(\hat{\phi},\hat{\tau},\hat{\sigma})$ or $(\hat{\phi},\hat{\tau},\hat{\chi},\hat{\sigma})$ of 2D/3D.
\end{flushleft}
\begin{algorithmic}[1]
\While{$\|\mathcal{\hat{G}}(\Theta^{k})-\mathcal{\hat{G}}(\Theta^{k-1})\|>\mathrm{tol}$ and $k\leq \mathrm{Iter}$}
\State Compute the gradient of loss function $\mathcal{\hat{G}}$ defined by (\ref{eq:discrete_LSF_AP2D}) or (\ref{eq:discreteAPFOS-LSfunctional}) (2D/3D) with respect to $\Theta^{k}$;
\State Update network parameters $\Theta^{k}$ by the descent optimization algorithm (i.e., L-BFGS);
\State Compute the value of loss function $\mathcal{\hat{G}}(\Theta^{k})$;
    \State $k=k+1$.
\EndWhile
\State Perform the prediction by network $(\hat{\phi},\hat{\tau},\hat{\sigma})$ or $(\hat{\phi},\hat{\tau},\hat{\chi},\hat{\sigma}):=\mathcal{N}(\mathbf{x};\Theta)$ to obtain the numerical solution, where $\Theta$ denotes the optimal parameters by the iteration;
\end{algorithmic}
\end{spacing}
\end{algorithm}

\begin{algorithm}[t]\begin{spacing}{1.2}
\caption{APFOS-DNN method for identification problem}\label{alg:2}
\begin{flushleft}
\hspace*{0.02in} {\bf Architecture:}
Number of discretized grid, hidden layer, neuron, and collocation points of boundary condition, model and available observation; activation function; initial weight and bias $\Theta^{0}$; stopping criterion $\mathrm{tol}$ and iteration number $\mathrm{Iter1,2}$.\\
\hspace*{0.02in} {\bf Input:}
Initial model coefficient $\varepsilon^{\ast,0}$ (first guess).\\
\hspace*{0.02in} {\bf Output:}
Optimal coefficient $\varepsilon^{\ast}_{train}$.\\
\hspace*{0.02in} {\bf Training step: coefficient updation}
\end{flushleft}
\begin{algorithmic}[1]
\While{$\|\mathcal{\hat{G}}(\varepsilon^{\ast,k})-\mathcal{\hat{G}}(\varepsilon^{\ast,k-1})\|>\mathrm{tol}$ and $k\leq \mathrm{Iter1}$}
\State Compute the gradient of loss function $\mathcal{\hat{G}}$ defined by (\ref{eq:ident_discrete_LSF_AP2D}) with respect to $\varepsilon^{\ast,k}$;
\State Update network parameters $\varepsilon^{\ast,k}$ by the descent optimization algorithm (i.e., Adam algorithm);
\State Compute the value of loss function $\mathcal{\hat{G}}(\varepsilon^{\ast,k})$;
    \State $k=k+1$.
\EndWhile
\end{algorithmic}
\begin{flushleft}
\hspace*{0.02in} {\bf Input:}
Location of grid points $\mathbf{x}$ and initial coefficient $\varepsilon^{\ast,0}:=\varepsilon^{\ast}_{train}$.\\
\hspace*{0.02in} {\bf Output:}
Optimal coefficient $\varepsilon$ and numerical solution $(\hat{\phi},\hat{\tau},\hat{\sigma})$.\\
\hspace*{0.02in} {\bf Network optimization step with coefficient updation}
\end{flushleft}
\begin{algorithmic}[1]
\While{$\|\mathcal{\hat{G}}(\Theta^{k};\varepsilon^{\ast,k})-\mathcal{\hat{G}}(\Theta^{k-1};\varepsilon^{\ast,k-1})\|>\mathrm{tol}$ and $k\leq \mathrm{Iter2}$}
\State Compute the gradient of loss function $\mathcal{\hat{G}}$ with respect to $(\Theta^{k};\varepsilon^{\ast,k})$;
\State Update network parameters $(\Theta^{k};\varepsilon^{\ast,k})$ by the descent optimization algorithm (i.e., L-BFGS);
\State Compute the value of loss function $\mathcal{\hat{G}}(\Theta^{k};\varepsilon^{\ast,k})$;
    \State $k=k+1$.
\EndWhile
\State Perform the prediction by network $(\hat{\phi},\hat{\tau},\hat{\sigma}):=\mathcal{N}(\mathbf{x};(\Theta;\varepsilon))$ to obtain the numerical solution, where $\varepsilon:=\exp(\varepsilon^{\ast})$ and $(\Theta;\varepsilon^{\ast})$ denotes the optimal parameters by the iteration.
\end{algorithmic}
\end{spacing}
\end{algorithm}

\begin{figure*}[!t]
\centering
\subfloat[\emph{Case 1}]{\includegraphics[width=2.1in]{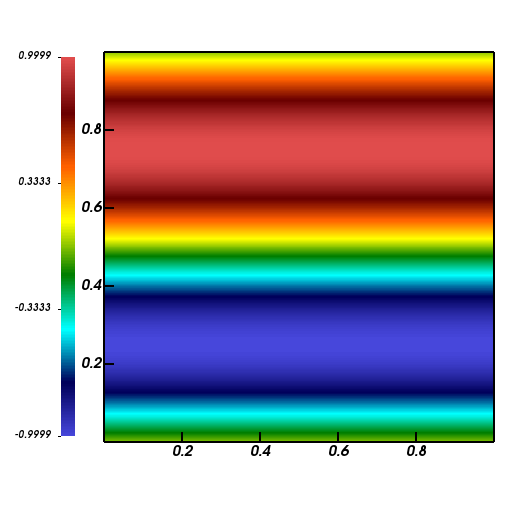}\label{fig:1-1}}
\hfil
\subfloat[\emph{Case 2}]{\includegraphics[width=2.1in]{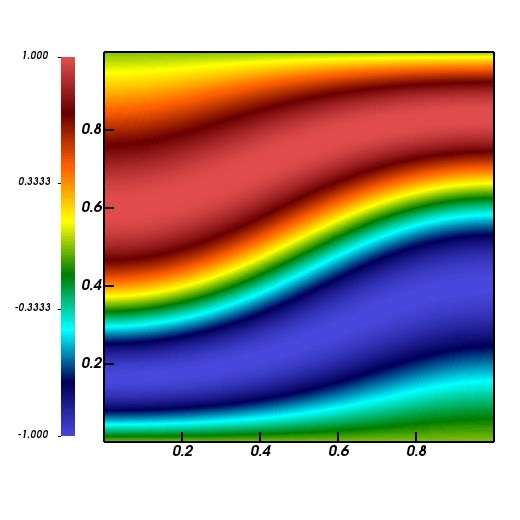}\label{fig:1-2}}
\hfil
\subfloat[\emph{Case 3}]{\includegraphics[width=2.1in]{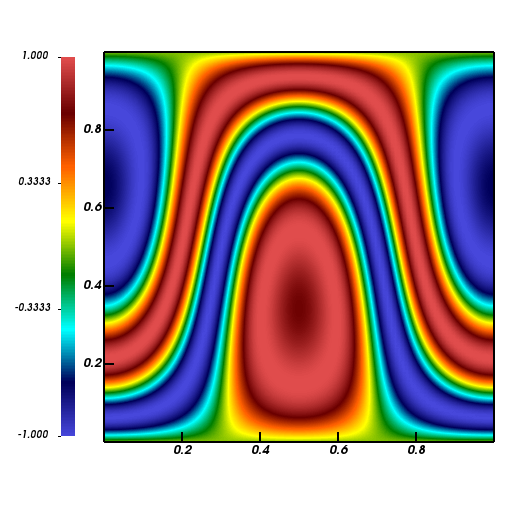}\label{fig:1-3}}
\caption{Exact solution $\phi_{e}$ in three 2D cases with $\varepsilon=1e-20$. From left to right: magnetic field $\mathbf{b}$ aligned with $z$-axis, not aligned with $z$-axis and closed magnetic field.}
\label{fig:1}
\end{figure*}

\begin{figure*}[!t]
\centering
\includegraphics[width=3.0in]{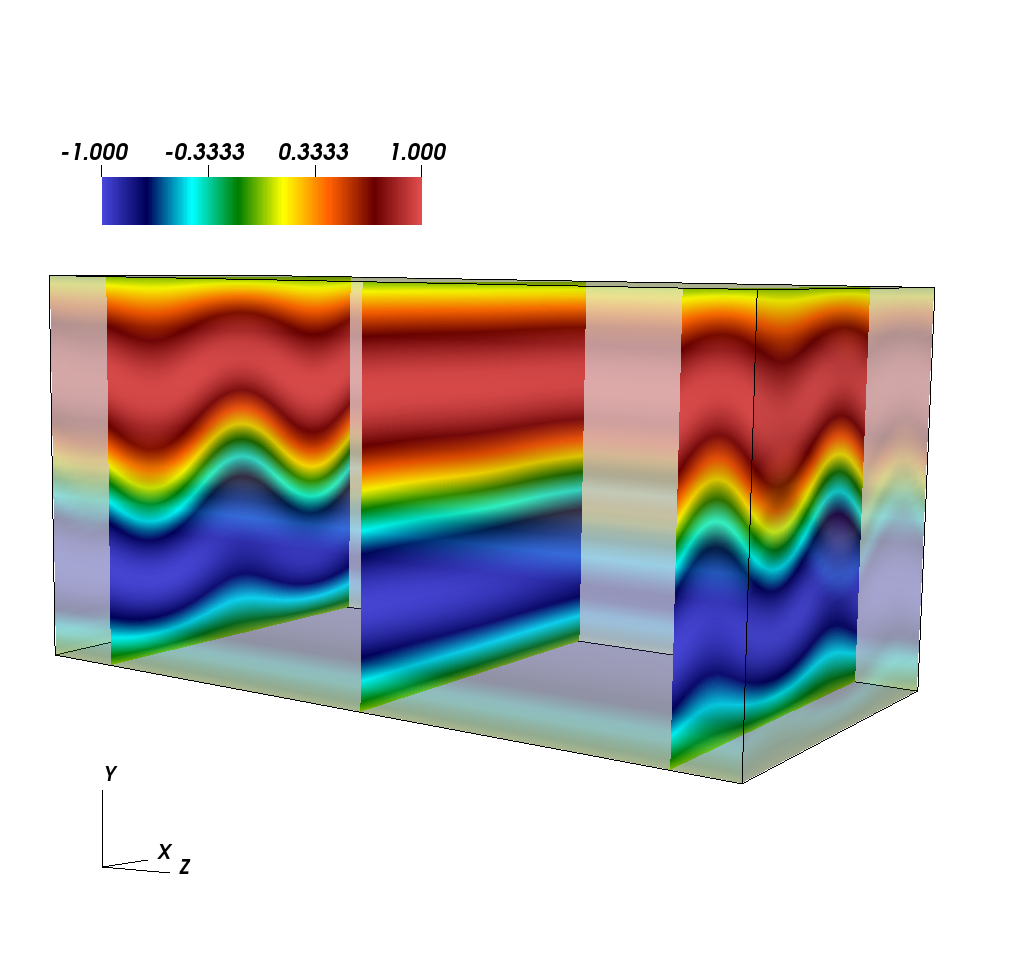}
\caption{Exact solution $\phi_{e}$ in 3D case (\emph{Case 2} with $\varepsilon=1e-20$).}
\label{fig:2}
\end{figure*}

\subsection{Numerical implementation and numerical results}

Herein a numerical scheme based on the deep learning approach is implemented to obtain the numerical solution of anisotropic elliptic equation \eqref{eq:anisotropic_pb}. Our goal aims at finding the optimal solution of the least-squares problem described by \eqref{eqs:discrete_LSF_AP2D}, \eqref{eqs:discreteAPFOS-LSfunctional} and \eqref{eqs:ident_discrete_LSF_AP2D}. Therefore, the first-order derivatives with respect to the network parameters $\Theta$ (i.e., weight $\mathcal{W}$ and bias $\mathcal{B}$), need to be computed. In addition, we observe that the PDE-based network involves differential operators from the forward model, such as the first-order derivatives with respect to the coordinates, $\partial _{x}(\cdot)$ and $\partial _{y}(\cdot)$. Herein, the dominant automatic differentiation (AD) technique is employed that allows efficient computation of the derivatives. The so-called back-propagation algorithm belonging to AD has been popular among large scaled inverse problems, especially for deep learning \cite{ADindeep}. L-BFGS \cite{1989On} and stochastic gradient descent (SGD) algorithm (i.e., Adam algorithm) \cite{2014Adam} for large data-sets are used for updating the network parameters.

In the next, we focus on the settings of the architecture of the deep neural network in 2D/3D forward modeling and identification test cases and numerical results.

\subsubsection{Data-driven solutions in 2D}

It is well known that the selection of the hyper-parameters (i.e., the number of layer and neurons) will be crucial to the DNN-based scheme. In general, it could have an influence on the accuracy of the numerical solution and the convergence of the algorithm. In this study, the hyper-parameters could be derived from a twin experiment \cite{2010Assimilation}, in which the results by the setting of $3$, $4$, $5$ hidden layers and $40$, $60$, $80$ neurons are compared. It can be concluded from Table \ref{tab:testnumber}, when the network contains 4/4 layers and 40/60 neurons, the best results can be obtained by non-AP-DNN and APFOS-DNN methods, respectively. Therefore, in 2D test cases, such robust hyper-parameters are adopted.

The solution is discretized at a square domain $\Omega=[0,1]\times[0,1]$ with $N\times N=100\times100$ grids. Both the numbers of the collocation points at the Dirichlet boundary $\Gamma_D=\{0,1\}\times[0,1]$ and Neumann boundary  $\Gamma_N=[0,1]\times\{0,1\}$ are set to $N_{d}=N_{n}=100$, as well as $N_{f}=4000$ inside the domain. According to the definition in \cite{Raissi2019}, collocation points are selected randomly and only the information on such points are available. We take the weights $(\beta_D, \beta_N)=(N_d, 1)$ and $(\alpha_D, \alpha_N)=(N_d, 1)$ in \eqref{eq:discrete_LSF_AP2D} and \eqref{eq:discrete_LSF_anisotropic} respectively.

Table \ref{tab:2dcase} shows the errors between the numerical solution obtained by the DNN-based method and the exact one. Meanwhile, the results at $1000$ and $10000$ iteration numbers are also recorded for analyzing the convergence (Normal and bold fonts). In test \emph{Case 1}, it can be seen that the $E_{2}$ errors easily reach at the magnitude of $1e-04$ and $1e-03$ by using the non-AP-DNN approach when $\varepsilon=1$ and $1e-02$, respectively. The APFOS-DNN method outperforms the non-AP-based one for different $\varepsilon$, specially for $\varepsilon\ll 1$ and the $E_{2}$ error could be less than $1e-04$. More clearly, the numerical solutions at $x=0.5$ are displayed by Fig. \ref{fig:3-1}-\ref{fig:3-3}. Those numerical results demonstrate that the accuracy of the APFOS scheme is not relevant to anisotropic strength $\varepsilon$. For the non-AP scheme, the numerical error will be involved when $\varepsilon \ll1$ and it shows the non-AP scheme diverges for $\varepsilon=1e-20$.

In test \emph{Case 2}, it could be found from Table \ref{tab:2dcase} that the solution with desired accuracy could not be reached by the non-AP-DNN method in the case of $\varepsilon=1e-02$, which can be explained as numerical pollution as mentioned in~\cite{yang2020Preserving}. The algorithm converges at around 5000 iterations and the value of loss function is $8.8e-05$. In the contrary, there is no doubt that the proposed APFOS-DNN approach is as effective as ever. The $E_{2}$ error could reach at the magnitude of $1e-04$. The difference could be found in Fig \ref{fig:3-4} and \ref{fig:3-5}. Non-AP method cannot approximate the peaks and troughs of the solution well. Fig \ref{fig:3-5} and \ref{fig:3-6} demonstrate that combining with the AP scheme, neural network is capable of maintaining the accuracy uniformly with respect to $\varepsilon$.

Additionally, we focus on a complex example (\emph{Case 3}) that contains the closed magnetic field. The difficulty of such problem lies in the singularity phenomenon appears in the center of closed magnetic field. There are lots of singularity points at which the gradient of the field $\mathbf{b}$ tends to infinity. Furthermore, the exact solution covers steep gradient characteristics around closed magnetic field (as shown in Fig. \ref{fig:1-3}). Therefore, it is necessary to make appropriate adjustments on the architecture of the neural network. Particularly, in the case of $\varepsilon=1$, herein a neural network covering four hidden layers with $80$ neurons is used in APFOS-DNN method. Meanwhile, the numbers of the collocation points inside the domain are changed to $N_{f}=5000$.

It can be seen from Table \ref{tab:2dcase} that for $\varepsilon=1$, non-AP-DNN method performs well. The $E_{2}$ error reaches at the magnitude of $1e-03$. However, in the case of $\varepsilon=1e-02$, the $E_{2}$ error stops at the magnitude of $1e-01$. Fig. \ref{fig:4-3} suggests the numerical solution could not approximate the exact one well near the boundary. It can be concluded from Table \ref{tab:2dcase} and Fig. \ref{fig:4-4}-\ref{fig:4-5} that the APFOS-DNN method could achieve the numerical approximation with expected accuracy, whether inside the domain of closed magnetic field or at the location where the solution contains steep gradient characteristics. Compared with previous examples, the $E_{\infty}$ errors of \emph{Case 3} (as shown in Table \ref{tab:2dcase}) imply that approximating the solution at the singularity point will inevitably cause larger errors.

In conclusion, compared with the non-AP-DNN method, the APFOS-DNN method is capable of overcoming numerical pollution of the anisotropic elliptic equation when $\varepsilon$ is small, thus it is uniformly accurate with respect to $\varepsilon$. In the next section, we mainly care about its ability of dealing with 3D problem.

\begin{table}[]
\renewcommand{\arraystretch}{1.5}
    \caption{Comparison of $E_{2}$ errors by using different numbers of neurons and hidden layers. Normal and bold fonts represent the $E_{2}$ errors of non-AP-DNN method at \emph{Case 1} with $\varepsilon=1e-02$ and APFOS-DNN method at \emph{Case 2} with $\varepsilon=1e-20$.}
    \vspace{10pt}
    \centering
    \begin{tabular}{c|cccc}
        \hline
        \diagbox{Neuron}{Error}{Layer} & 3 & 4 & 5 \\
        \hline
        \multirow{2}{1em}{40}  & 8.97e-03 & \underline{5.25e-03}  &  7.18e-03          \\
          & \textbf{1.36e-03}  & \textbf{1.69e-03}               & \textbf{1.01e-03}                        \\ \hline
        \multirow{2}{1em}{60}  & 8.27e-03     & 1.62e-02             & 1.17e-02                       \\
          & \textbf{1.47e-03}     & \underline{\textbf{7.19e-04}}              & \textbf{1.02e-03}                           \\ \hline
           \multirow{2}{1em}{80}  & 3.77e-02     & 5.41e-02            & 2.58e-02                         \\
          & \textbf{8.03e-04}    & \textbf{9.43e-04}          & \textbf{8.21e-04}                          \\
        \hline
    \end{tabular}
    \label{tab:testnumber}
\end{table}

\begin{table}[]
\renewcommand{\arraystretch}{1.5}
    \caption{Comparison of errors in 2D case. Normal and bold fonts represent the errors at $1000$ and $10000$ iterations. From top to bottom for each \emph{Case}: $E_{1}$, $E_{2}$ and $E_{\infty}$.}
    \vspace{10pt}
    \centering
    \addtolength{\tabcolsep}{-2pt}
    \begin{tabular}{c|c|cc|cc|ccc}
        \hline
        \diagbox{Scheme}{Error}{Epsilon}& Case & \multicolumn{2}{c}{1} & \multicolumn{2}{c}{1e-02} & \multicolumn{2}{c}{1e-20}  \\
        \hline
       \multirow{9}*{\minitab[c]{Non-AP-DNN}} & \multirow{3}*{\minitab[c]{\emph{Case 1}}} & 2.53e-03 &\textbf{3.44e-04}   & 8.06e-02 &\textbf{5.07e-03} & -  & -           \\
       &&2.96e-03 &\textbf{4.30e-04}  & 7.83e-02 &\textbf{5.25e-03}             & -    & -                      \\
       & & 1.01e-02 &\textbf{1.29e-03}   & 7.83e-02 &\textbf{6.29e-03}             & -     & -                     \\ \cline{2-8}
      &\multirow{3}*{\minitab[c]{\emph{Case 2}}}  & 2.59e-03 &\textbf{4.04e-04}   & 8.53e-01 &\textbf{1.68e-02}             & -   & -                        \\
       & & 2.61e-03 &\textbf{4.90e-04}   & 9.07e-01& \textbf{1.67e-02}             & -   & -                        \\
       & & 8.92e-03 &\textbf{1.42e-03}   & 1.16 & \textbf{1.91e-02}             & -    & -                     \\ \cline{2-8}
     &\multirow{3}*{\minitab[c]{\emph{Case 3}}}  & 2.10e-01 &\textbf{2.66e-03}   & 9.95e-01 &\textbf{9.55e-02}             & -    & -                       \\
      &  & 1.98e-01 &\textbf{3.60e-03}   & 9.98e-01 &\textbf{2.34e-01}             & -    & -                       \\
      & & 2.80e-01 &\textbf{1.37e-02}   & 1.45 &\textbf{9.82e-01}             & -     & -                      \\ \hline

    \multirow{9}*{\minitab[c]{APFOS-DNN}} &\multirow{3}*{\minitab[c]{\emph{Case 1}}} & 1.14e-03 &\textbf{9.80e-05}   & 1.96e-03 &\textbf{3.16e-04}            & 2.20e-03 &\textbf{4.74e-04}                        \\
      &   & 1.17e-03 &\textbf{1.06e-04}   & 2.27e-03 &\textbf{3.55e-04}             & 2.50e-03 &\textbf{5.56e-04}                          \\
       &   & 2.46e-03 &\textbf{2.02e-04}   & 5.93e-03 &\textbf{9.90e-04}            & 7.49e-03 &\textbf{1.59e-03}                           \\ \cline{2-8}
     &\multirow{3}*{\minitab[c]{\emph{Case 2}}} & 2.06e-03 &\textbf{1.52e-04}   & 3.54e-03 &\textbf{6.71e-04}            & 7.43e-03 &\textbf{6.30e-04}               \\
      &   & 2.08e-03 &\textbf{1.60e-04}    & 3.95e-03& \textbf{7.65e-04}             & 8.96e-03 &\textbf{7.19e-04}                          \\
      &    & 3.03e-03 &\textbf{3.09e-04}   & 8.48e-03& \textbf{1.80e-03}            & 1.91e-02 &\textbf{1.95e-03}                          \\ \cline{2-8}
      &\multirow{3}*{\minitab[c]{\emph{Case 3}}} & 1.32e-01 &\textbf{8.01e-03}   & 7.73e-02 &\textbf{2.95e-02}            & 1.23e-01 &\textbf{2.06e-02}                        \\
       &  & 1.51e-01 &\textbf{8.07e-03}   & 8.83e-02 &\textbf{3.67e-02}             & 1.51e-01 &\textbf{2.57e-02}                          \\
       &   & 3.25e-01 &\textbf{1.84e-02}   & 2.29e-01 &\textbf{1.09e-01}            & 4.35e-01 &\textbf{8.20e-02}                         \\
        \hline
    \end{tabular}
    \label{tab:2dcase}
\end{table}

\begin{figure*}[!t]
\centering
\subfloat[Non-AP-DNN, \emph{Case 1} with $\varepsilon=1e-02$]{\includegraphics[width=1.8in]{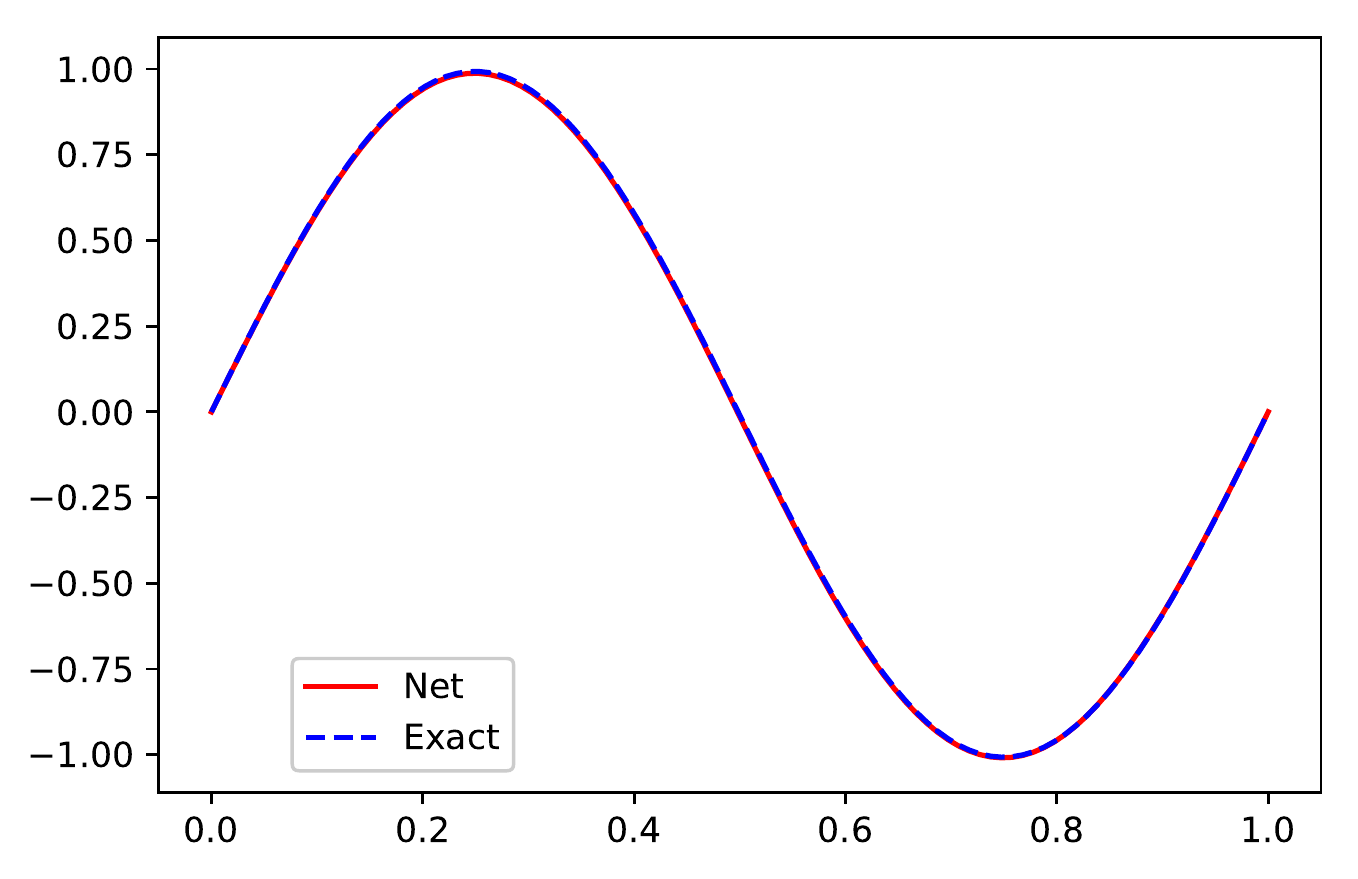}\label{fig:3-1}}
\hfil
\subfloat[APFOS-DNN, \emph{Case 1} with $\varepsilon=1e-02$]{\includegraphics[width=1.8in]{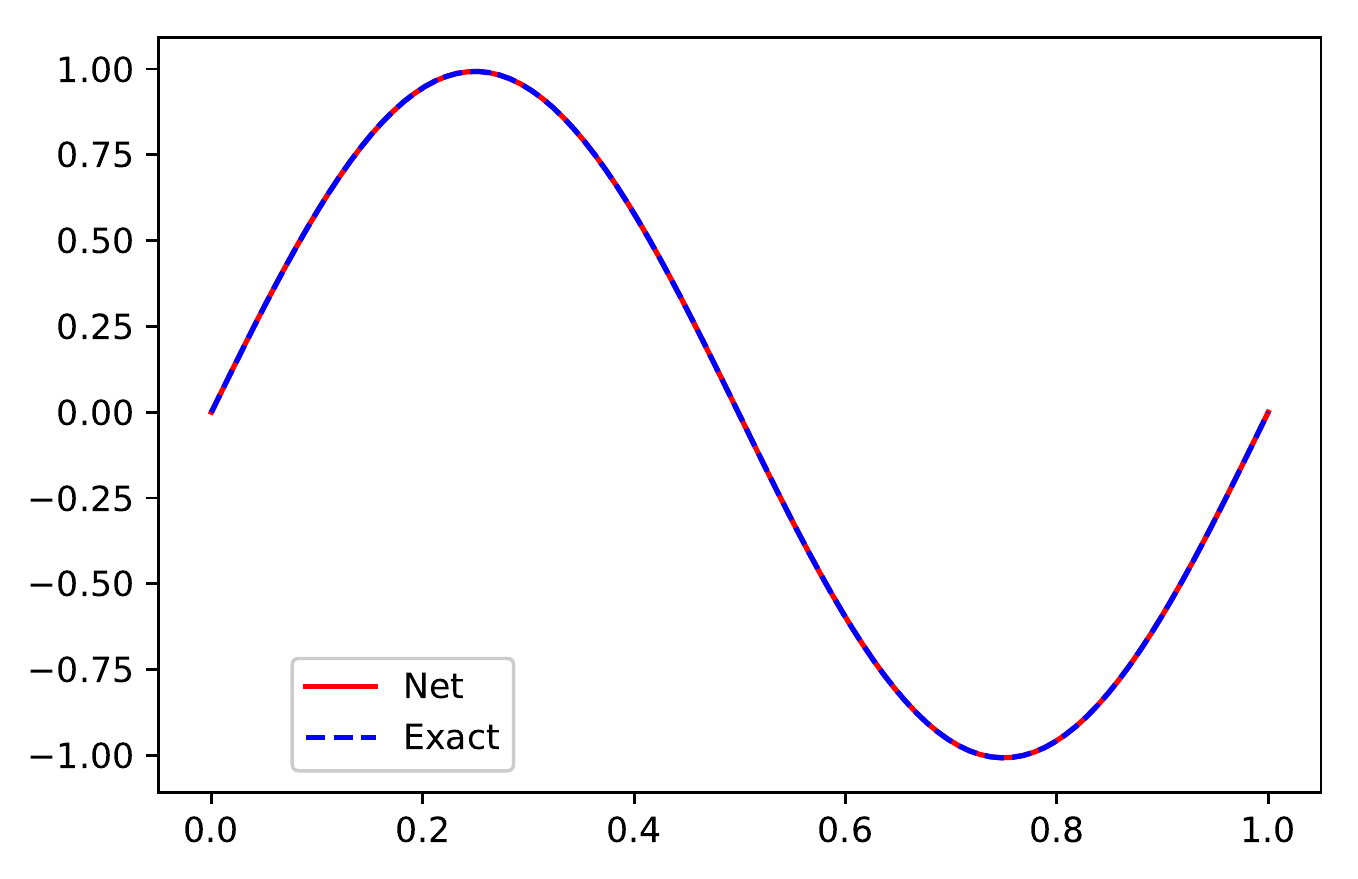}\label{fig:3-2}}
\hfil
\subfloat[APFOS-DNN, \emph{Case 1} with $\varepsilon=1e-20$]{\includegraphics[width=1.8in]{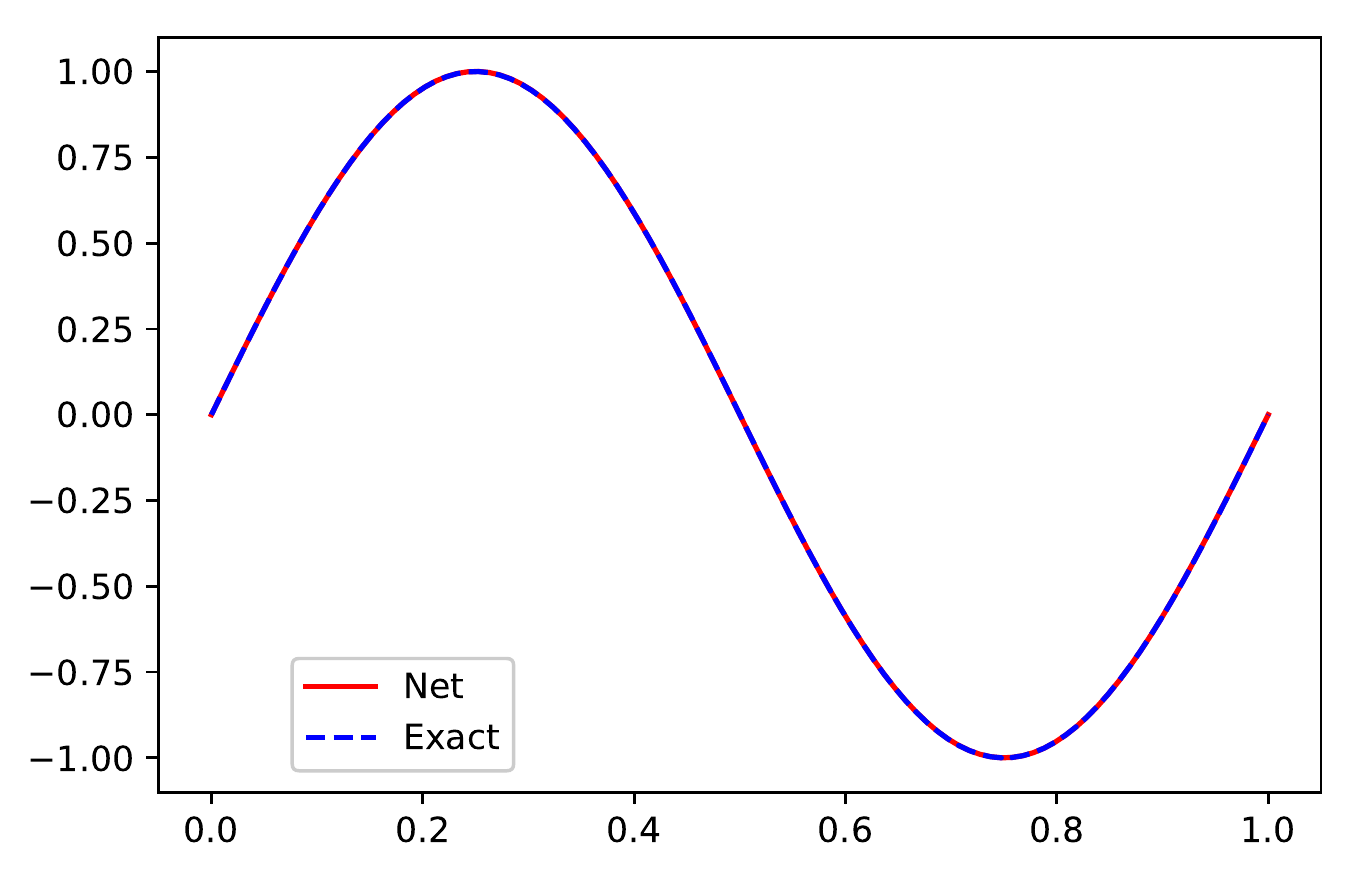}\label{fig:3-3}}
\vfil
\subfloat[Non-AP-DNN, \emph{Case 2} with $\varepsilon=1e-02$]{\includegraphics[width=1.8in]{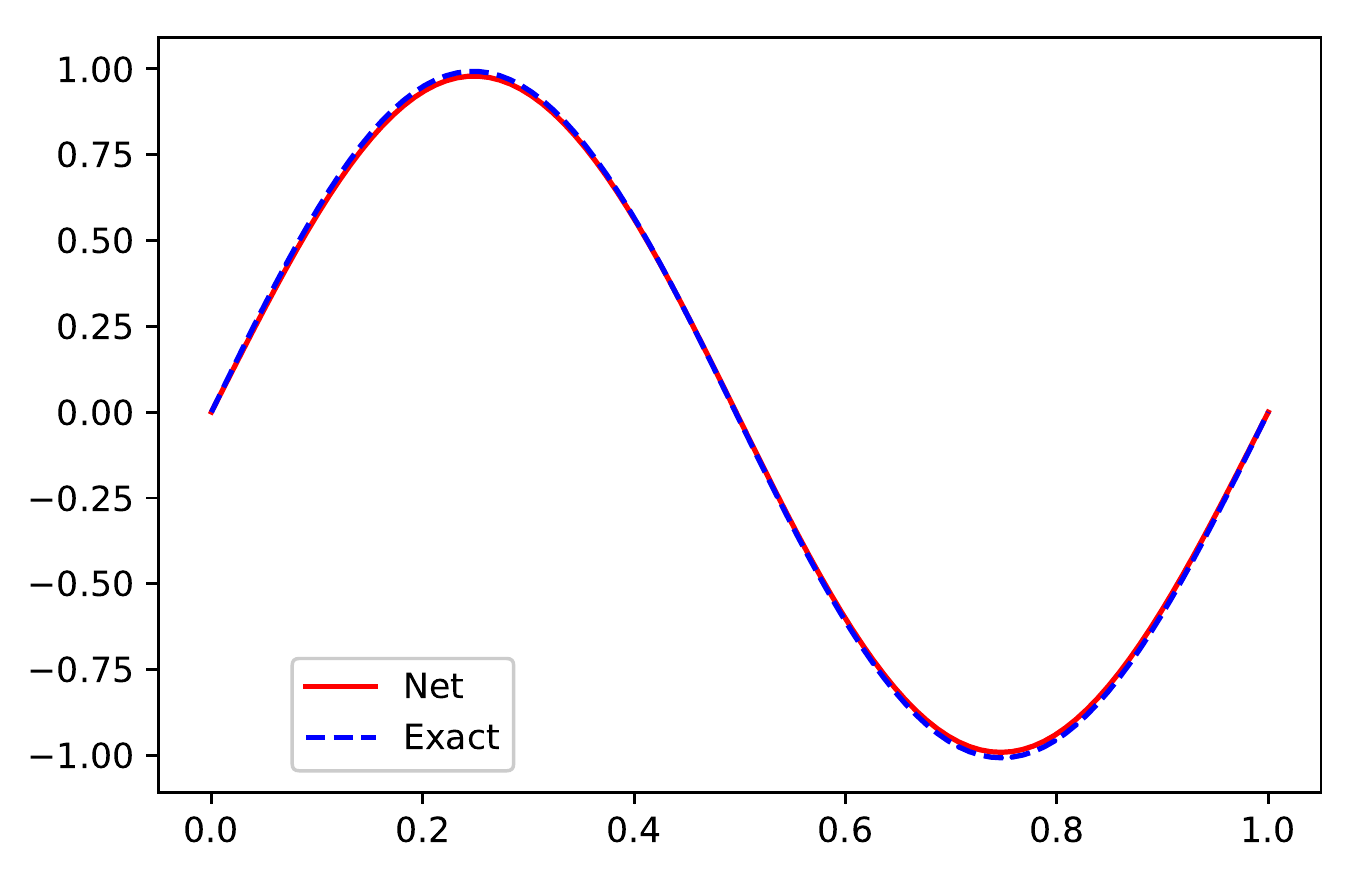}\label{fig:3-4}}
\hfil
\subfloat[APFOS-DNN, \emph{Case 2} with $\varepsilon=1e-02$]{\includegraphics[width=1.8in]{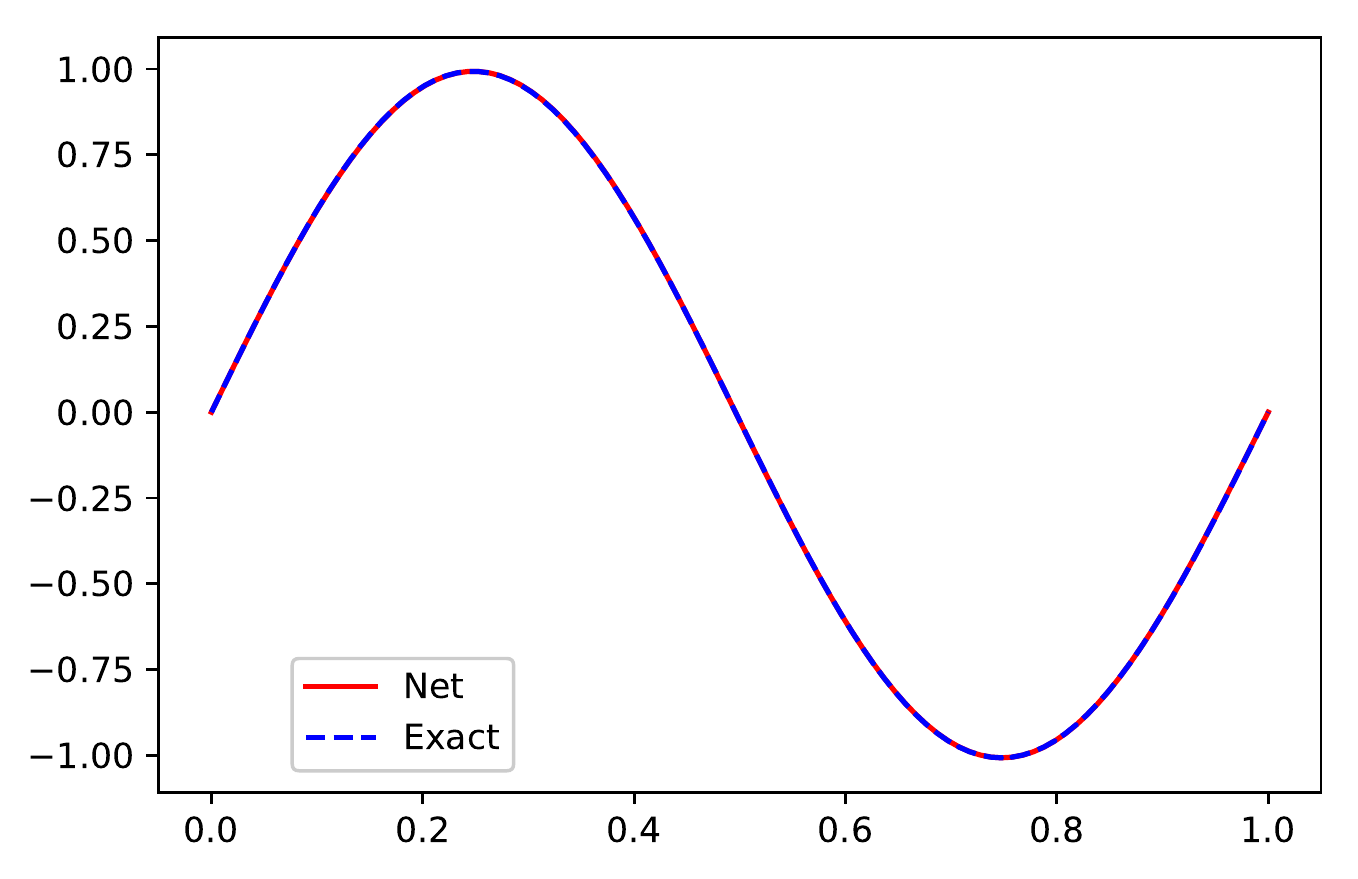}\label{fig:3-5}}
\hfil
\subfloat[APFOS-DNN, \emph{Case 2} with $\varepsilon=1e-20$]{\includegraphics[width=1.8in]{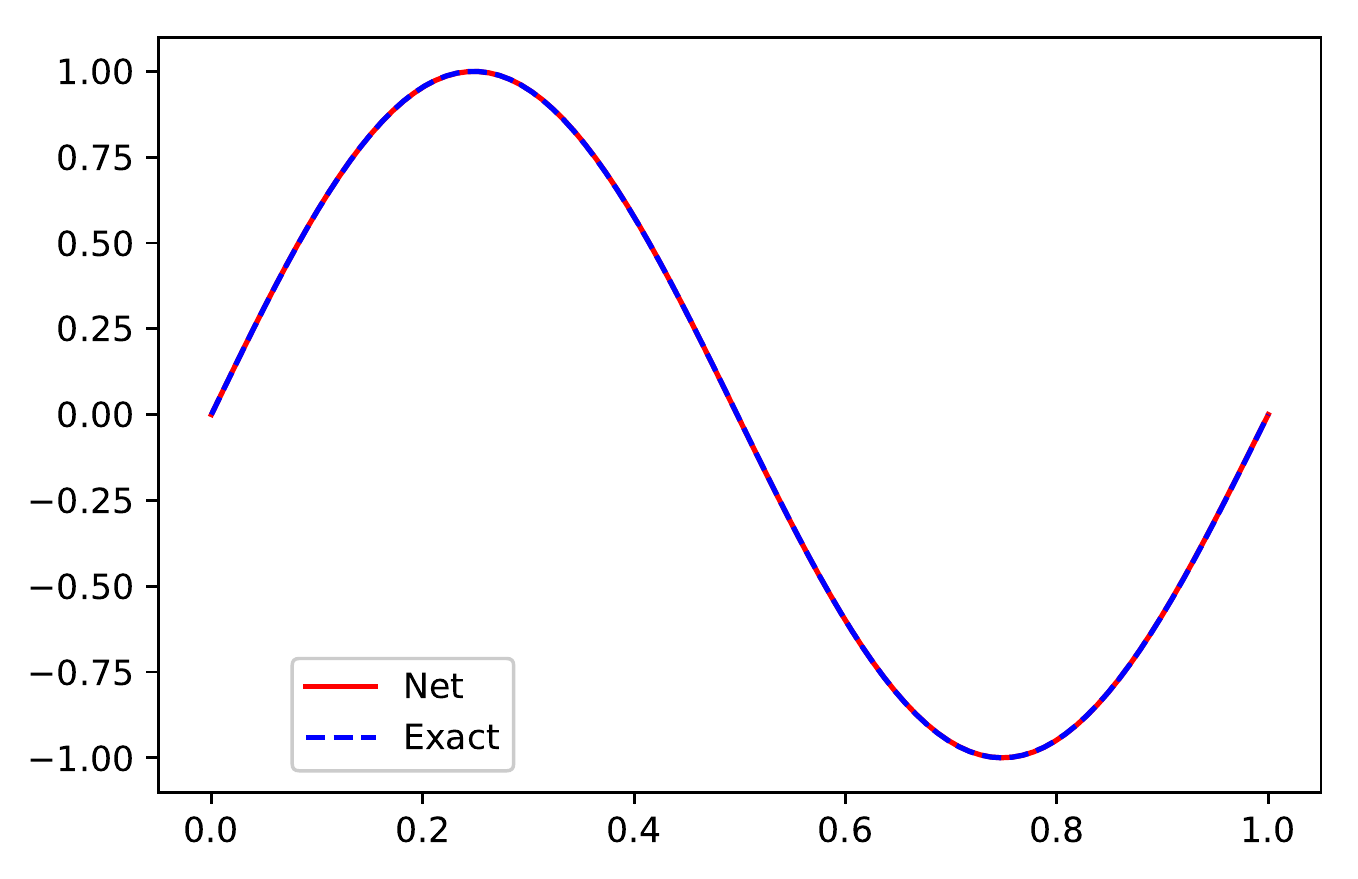}\label{fig:3-6}}
\caption{Comparison of numerical solution in 2D case obtained by non-AP-DNN and APFOS-DNN methods (\emph{Case 1} and \emph{2}, $x=0.5$).}
\label{fig:3}
\end{figure*}

\begin{figure*}[!t]
\centering
\subfloat[Non-AP-DNN, \emph{Case 3} with $\varepsilon=1$]{\includegraphics[width=1.8in]{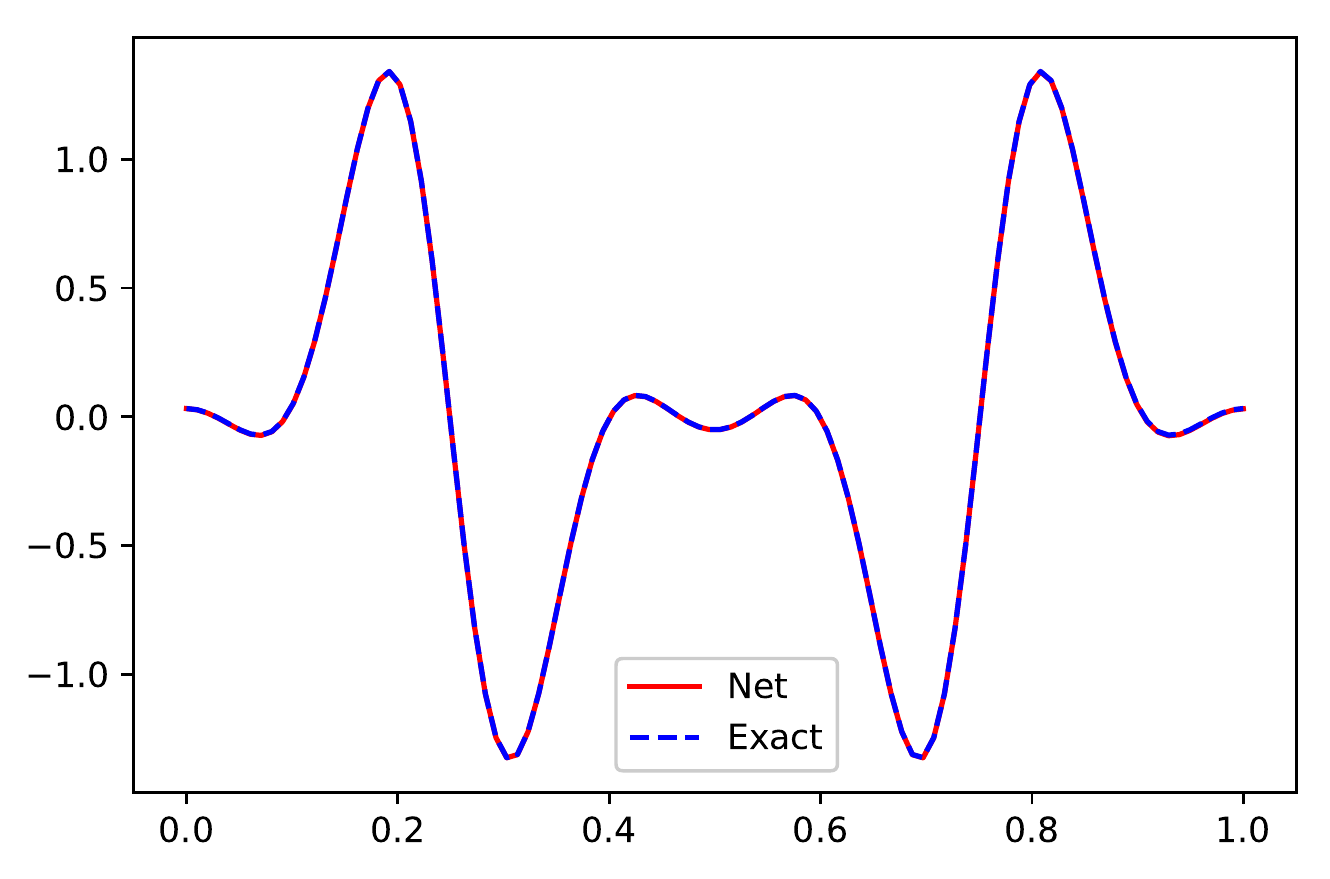}\label{fig:4-1}}
\hfil
\subfloat[APFOS-DNN, \emph{Case 3} with $\varepsilon=1$]{\includegraphics[width=1.8in]{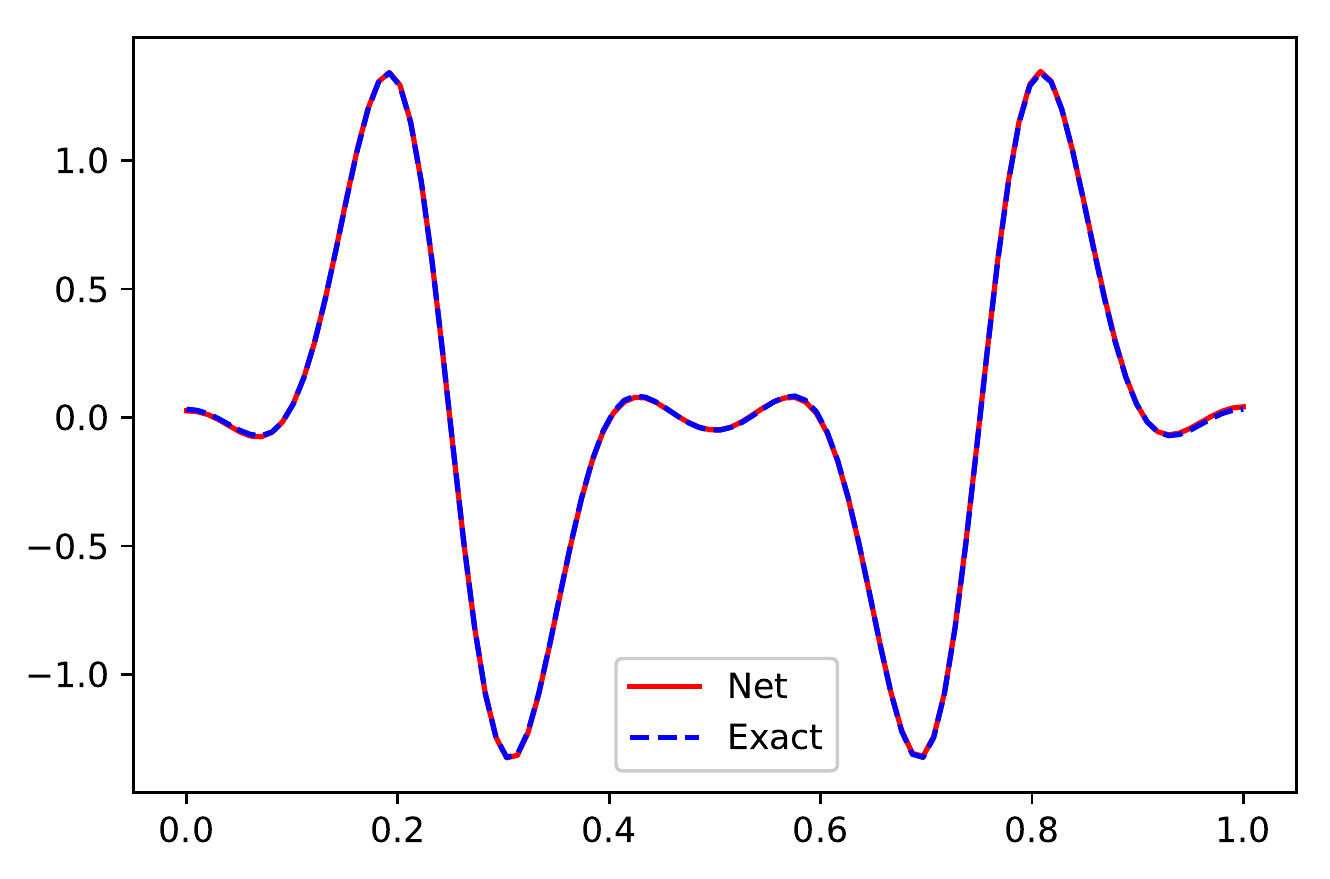}\label{fig:4-2}}
\vfil
\subfloat[Non-AP-DNN, \emph{Case 3} with $\varepsilon=1e-02$]{\includegraphics[width=1.8in]{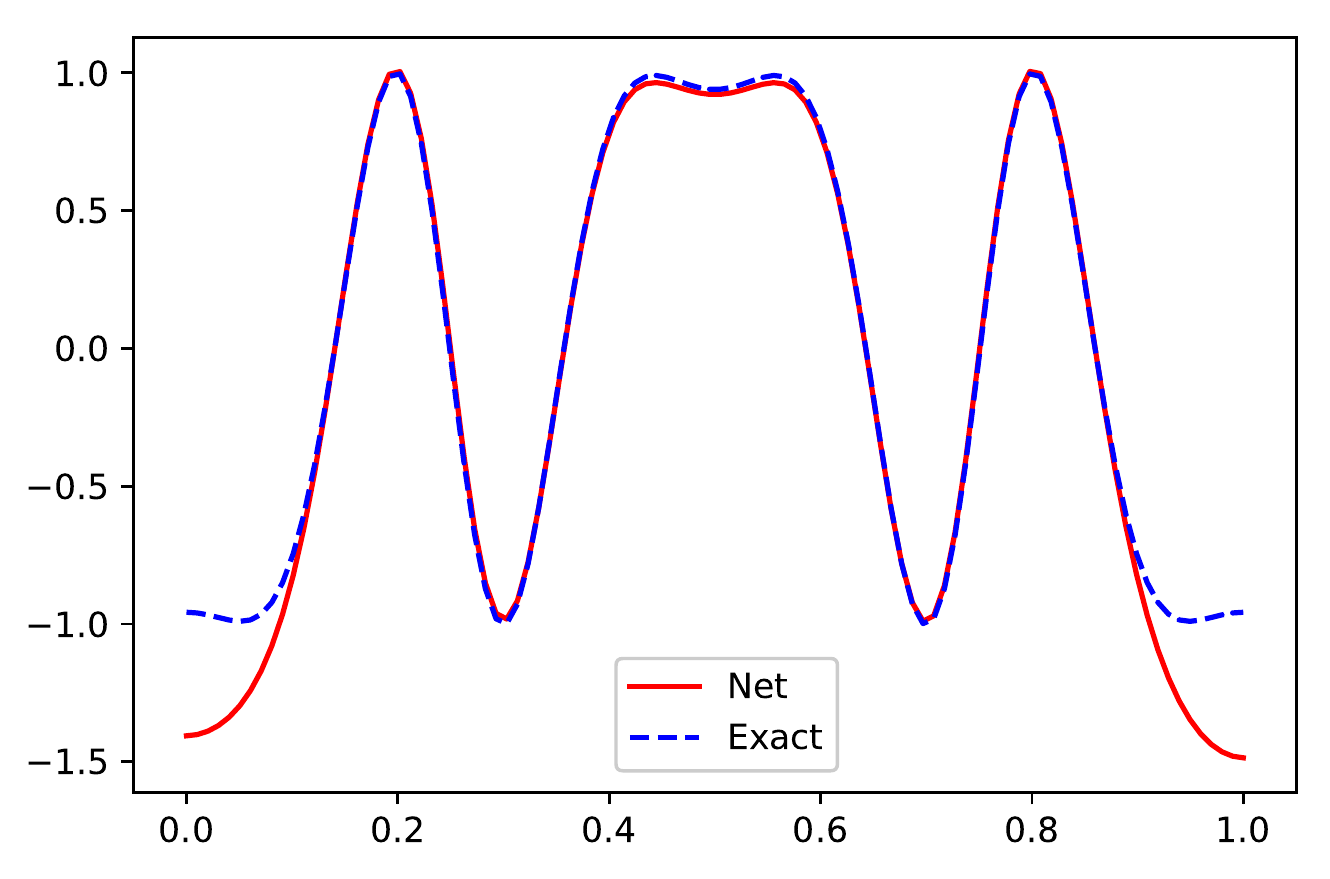}\label{fig:4-3}}
\hfil
\subfloat[APFOS-DNN, \emph{Case 3} with $\varepsilon=1e-02$]{\includegraphics[width=1.8in]{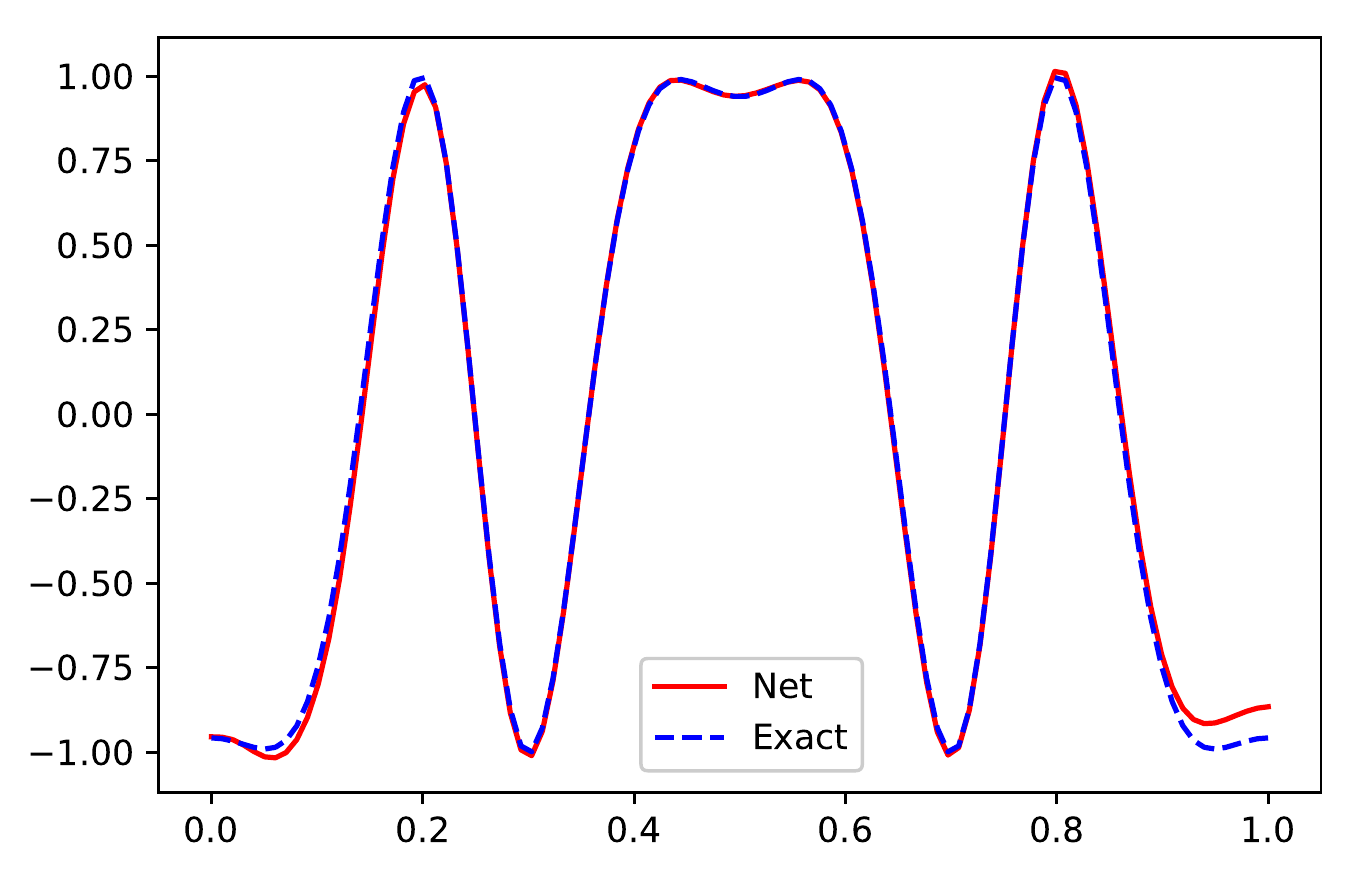}\label{fig:4-4}}
\vfil
\subfloat[APFOS-DNN, \emph{Case 3} with $\varepsilon=1e-20$]{\includegraphics[width=1.8in]{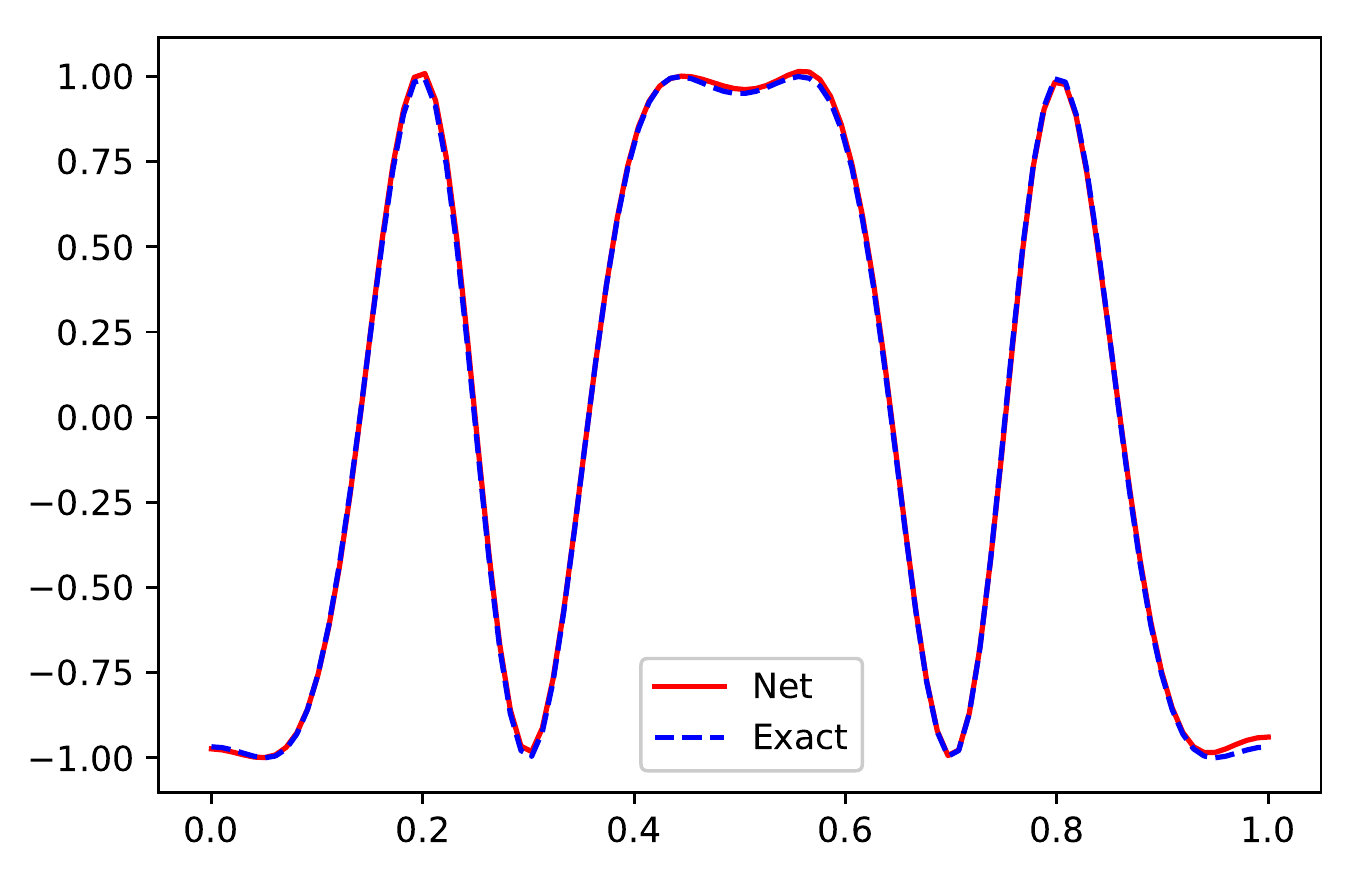}\label{fig:4-5}}
\caption{Comparison of numerical solution in 2D case obtained by non-AP-DNN and APFOS-DNN methods (\emph{Case 3}, $x=0.5$).}
\label{fig:4}
\end{figure*}

\subsubsection{Data-driven solutions in 3D}

In real applications, the 3D anisotropic elliptic equation could better describe the physical phenomenon. Therefore, herein we mainly care about the numerical implementation and performance of the proposed APFOS-DNN approach in 3D cases. The 3D numerical example established by \emph{Setup II} with parameter setting of \emph{Case 2} is investigated.

The 3D anisotropic elliptic equation is solved numerically at a cubic domain $\Omega=[0,1]\times[0,1]\times[0,1]$ with $N\times N\times N=50\times50\times50$ grids. The neural network covering four hidden layers with $60$ neurons at each layer is used in the non-AP-DNN and APFOS-DNN methods. The numbers of the collocation points at the Dirichlet boundary $\Gamma_D=\{\{0,1\}\times[0,1]\times[0,1],\,[0,1]\times\{0,1\}\times[0,1]\}$ and Neumann boundary $\Gamma_N=[0,1]\times[0,1]\times\{0,1\}$ are all set to $N_{d}=N_{n}=1000$, as well as $N_{f}=10000$ inside the domain. In the 3D case, we take the weights $(\beta_D, \beta_N)=(N_d, 1)$ in \eqref{eq:discreteAPFOS-LSfunctional}.

As expected, some impressive numerical results could be obtained by the neural network-based method in 3D case. Table \ref{tab:3dcase} records three types of errors at $5000$ and $10000$ iterations (Normal and bold fonts) for different $\varepsilon$ and methods. It can be observed that both approaches could produce the solution with $E_{2}$ errors of the magnitude of $1e-03$, except the non-AP-DNN method in the case of $\varepsilon=1e-02$ and $1e-20$. Because of the well-posedness of the APFOS scheme, the value of loss function $\mathcal{\hat{G}}$ defined by \eqref{eq:discreteAPFOS-LSfunctional} could drop to $0.007$ after $10000$ iterations when $\varepsilon=1e-20$. In addition, an obvious advantage over the classical methods is that the APFOS-DNN method could work well when the numbers of collocation points are far less than the discretized grids used for the classical methods. Notice that the numbers of collocation points  are $4000$  in 2D cases (40\% of uniform grid), and  they are only $10000$ in 3D cases (8\% of uniform grid).

Fig. \ref{fig:5} displays the corresponding numerical results at the location $(x=0.5,z=0)$ (Left column), $(x=0.5,z=0.5)$ (Middle column), and $(x=0.5,z=1)$ (Right column), respectively. The unsatisfied results shown in Fig. \ref{fig:5-1}-\ref{fig:5-3} indicate the ineffectiveness of the non-AP-DNN method in the case of small $\varepsilon$. The well-posed problem by using the APFOS scheme yields promising results (as shown in Fig. \ref{fig:5-4}-\ref{fig:5-9}). As it is said in \cite{Raissi2019}, the DNN-based method could achieve good prediction accuracy given a sufficiently expressive neural network architecture and a sufficient number of collocation points if the partial differential equation is well-posed. However, it can be found from Table \ref{tab:3dcase} that the $E_{2}$ errors are about 10 times larger than those in 2D case. A reasonable explanation is that the random selection of the collocation points leads to lose the location where the exact solution contains complex structures. In the future work, we will pay more attention to the adaptive grids method that takes the structure of the solution into consideration.

As a result, owing to the nice performance on 3D problem, the neural network-based APFOS method will be a good choice of strategy in real applications.

\begin{table}[]
\renewcommand{\arraystretch}{1.5}
    \caption{Comparison of errors in 3D case. Normal and bold fonts represent the errors at $5000$ and $10000$ iterations. From top to bottom for each method: $E_{1}$, $E_{2}$ and $E_{\infty}$.}
    \vspace{10pt}
    \centering
    \begin{tabular}{c|cc|cc|ccc}
        \hline
        \diagbox{Scheme}{Error}{Epsilon} & \multicolumn{2}{c}{1} & \multicolumn{2}{c}{1e-02} & \multicolumn{2}{c}{1e-20}  \\
        \hline
        \multirow{3}*{\minitab[c]{Non-AP-DNN}}  &  2.78e-03 & \textbf{1.75e-03}   & 9.98e-01 & \textbf{1.01}             & - & -                          \\
        & 3.28e-03 &\textbf{1.99e-03}   & 1.14 & \textbf{1.14}             & -    & -                      \\
          & 1.26e-02 &\textbf{6.97e-03}   & 1.79 & \textbf{1.75}             & -   & -                       \\ \hline
       \multirow{3}*{\minitab[c]{APFOS-DNN}}  & 4.07e-03 &\textbf{2.08e-03}   & 9.24e-03 &\textbf{5.95e-03}            & 9.81e-03 &\textbf{6.57e-03}                           \\
         & 4.46e-03 &\textbf{2.28e-03}   & 1.20e-02 &\textbf{7.30e-03}             & 1.23e-02 &\textbf{8.32e-03}                          \\
          & 8.73e-03 &\textbf{4.71e-03}   & 3.79e-02 &\textbf{2.01e-02}            & 4.86e-02 &\textbf{2.65e-02}                           \\
        \hline
    \end{tabular}
    \label{tab:3dcase}
\end{table}

\begin{figure*}[!t]
\centering
\subfloat[]{\includegraphics[width=1.8in]{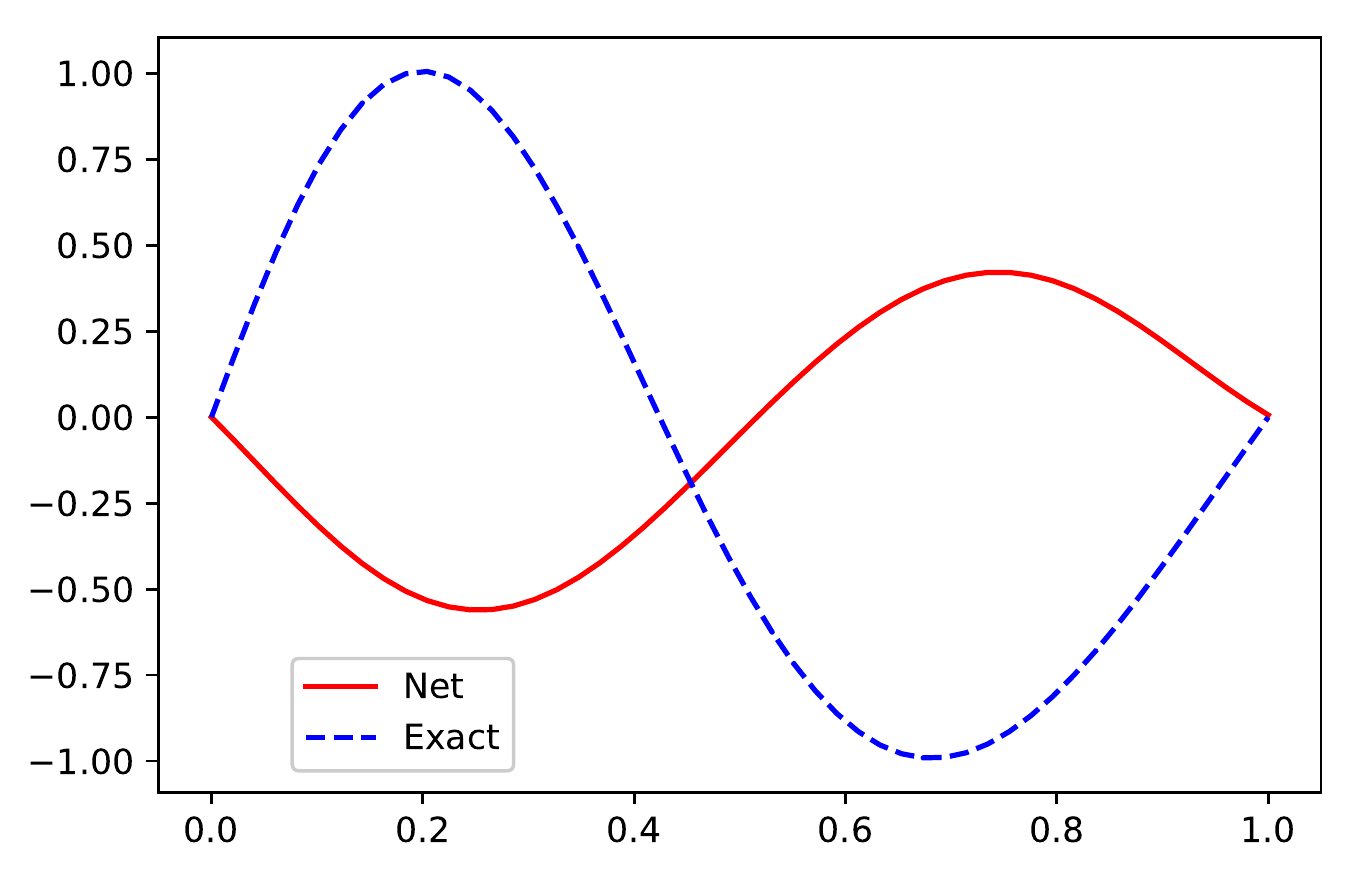}\label{fig:5-1}}
\hfil
\subfloat[]{\includegraphics[width=1.8in]{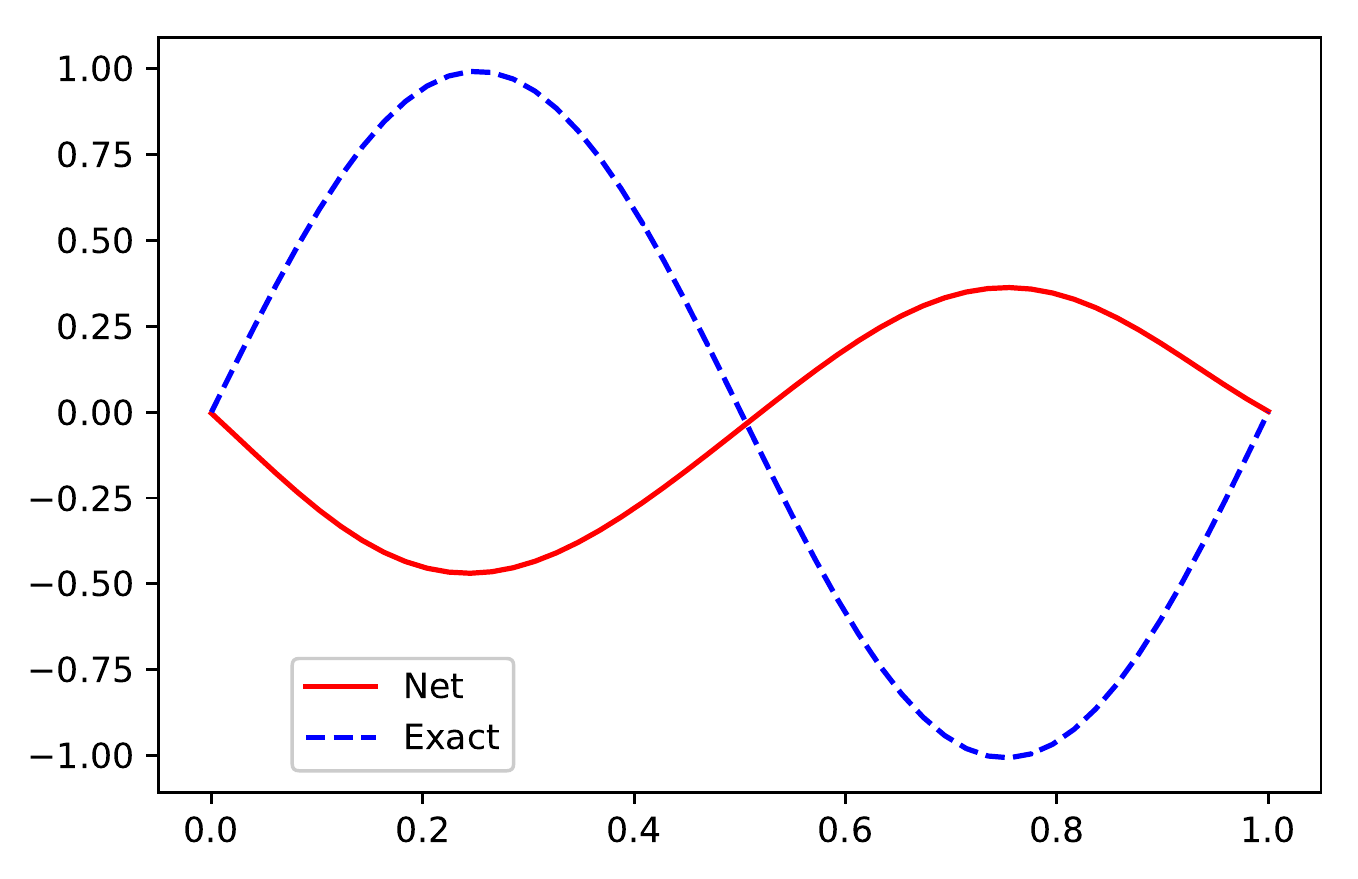}\label{fig:5-2}}
\hfil
\subfloat[]{\includegraphics[width=1.8in]{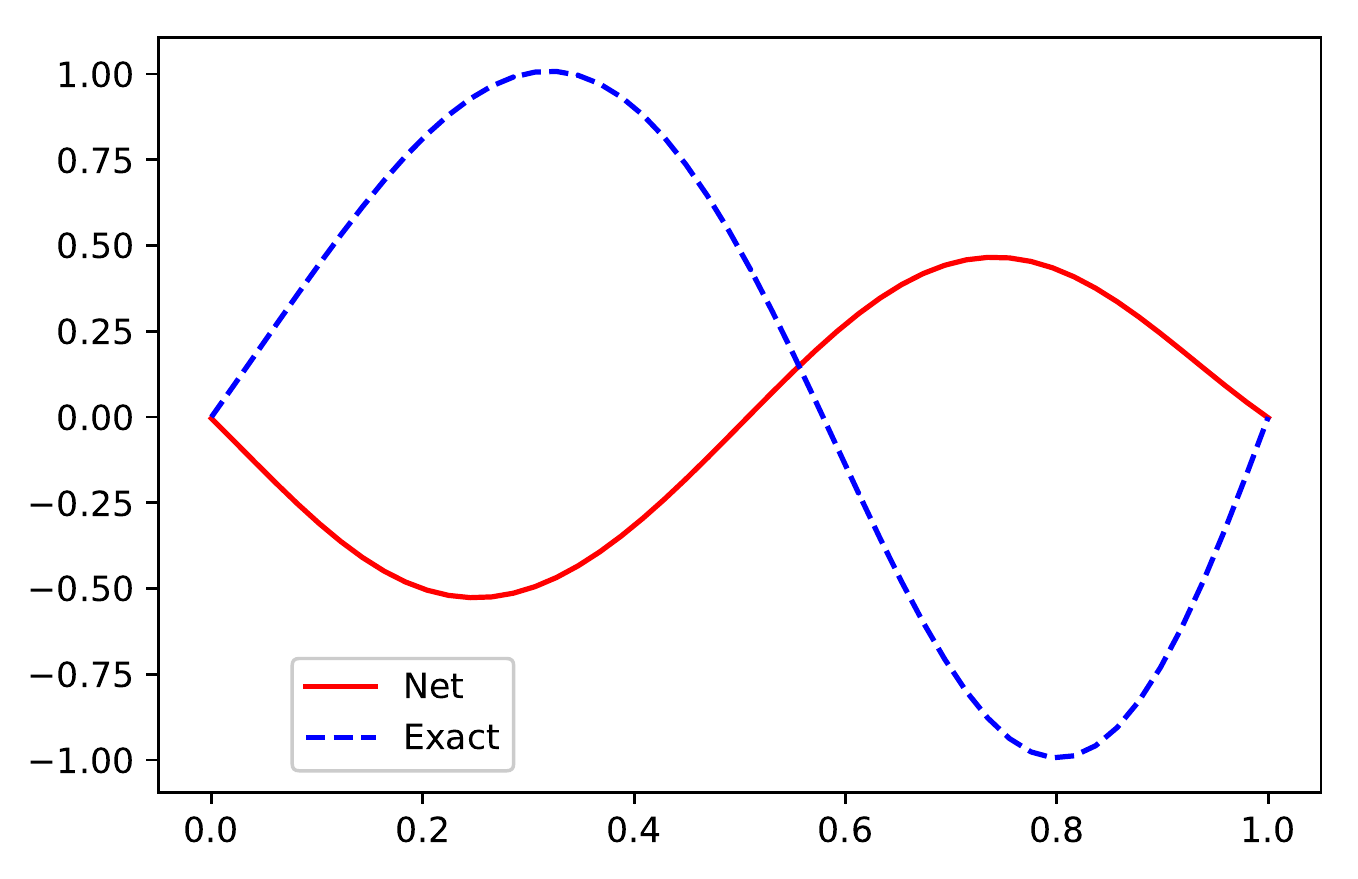}\label{fig:5-3}}
\vfil
\subfloat[]{\includegraphics[width=1.8in]{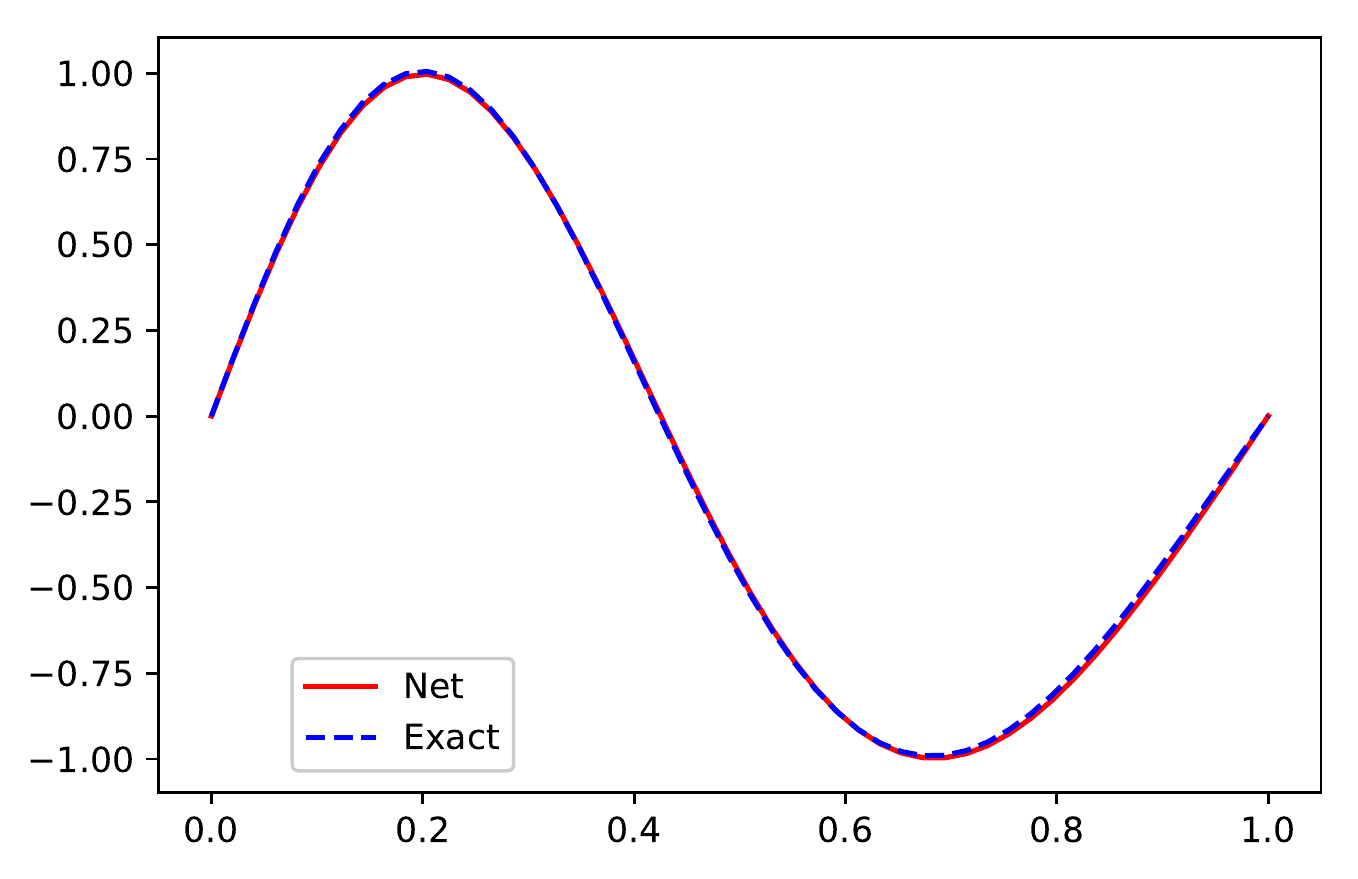}\label{fig:5-4}}
\hfil
\subfloat[]{\includegraphics[width=1.8in]{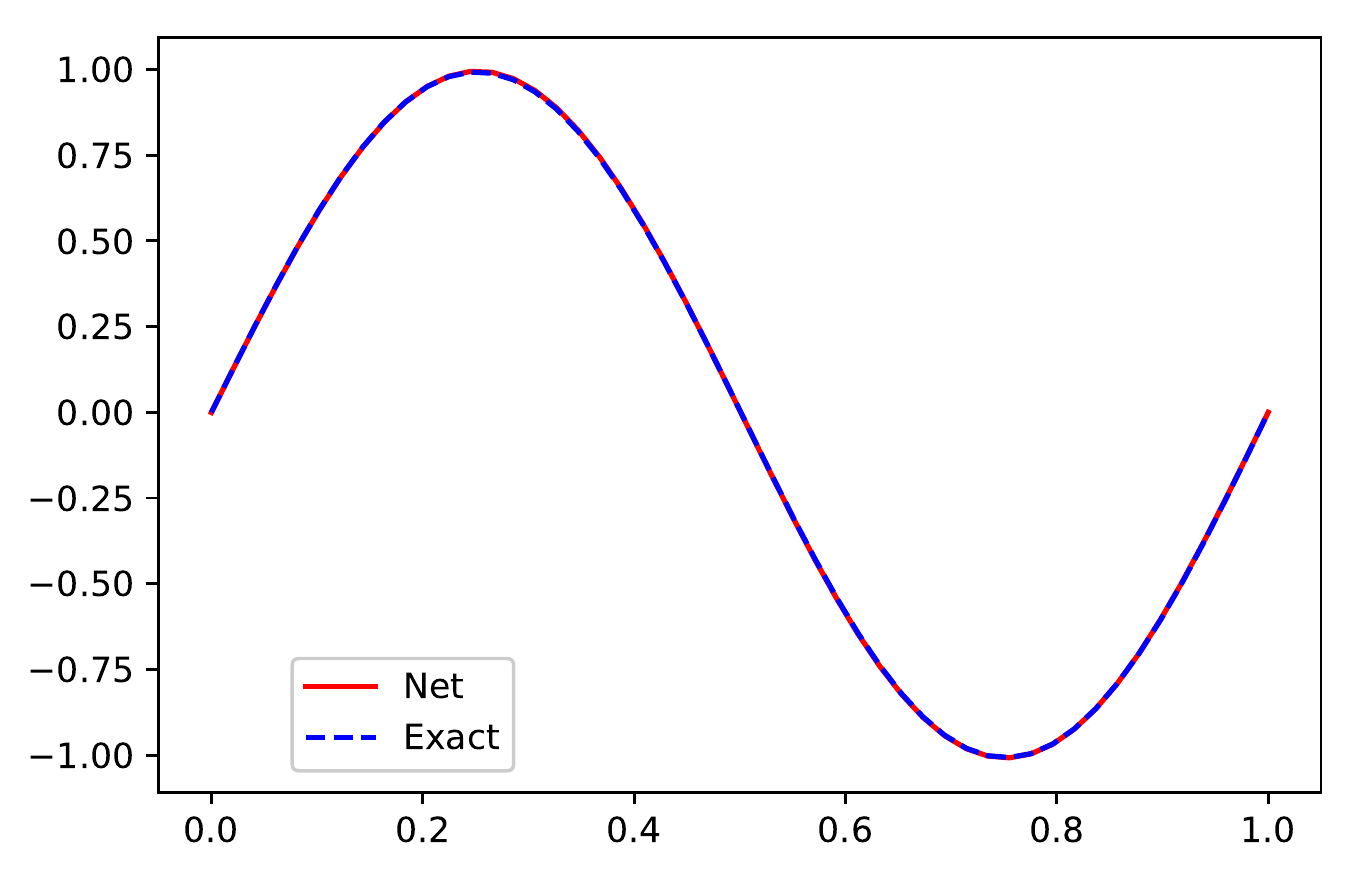}\label{fig:5-5}}
\hfil
\subfloat[]{\includegraphics[width=1.8in]{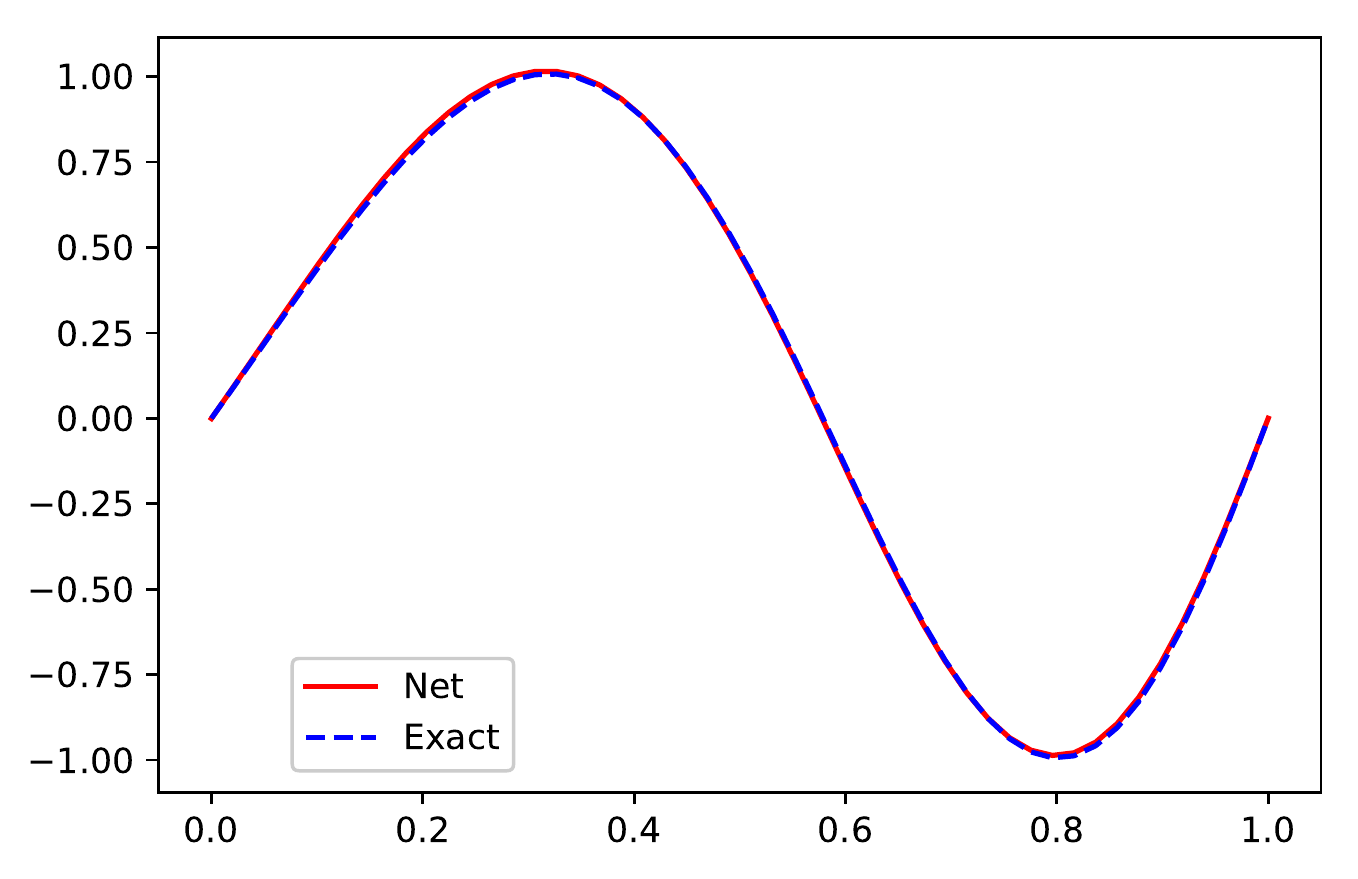}\label{fig:5-6}}
\vfil
\subfloat[]{\includegraphics[width=1.8in]{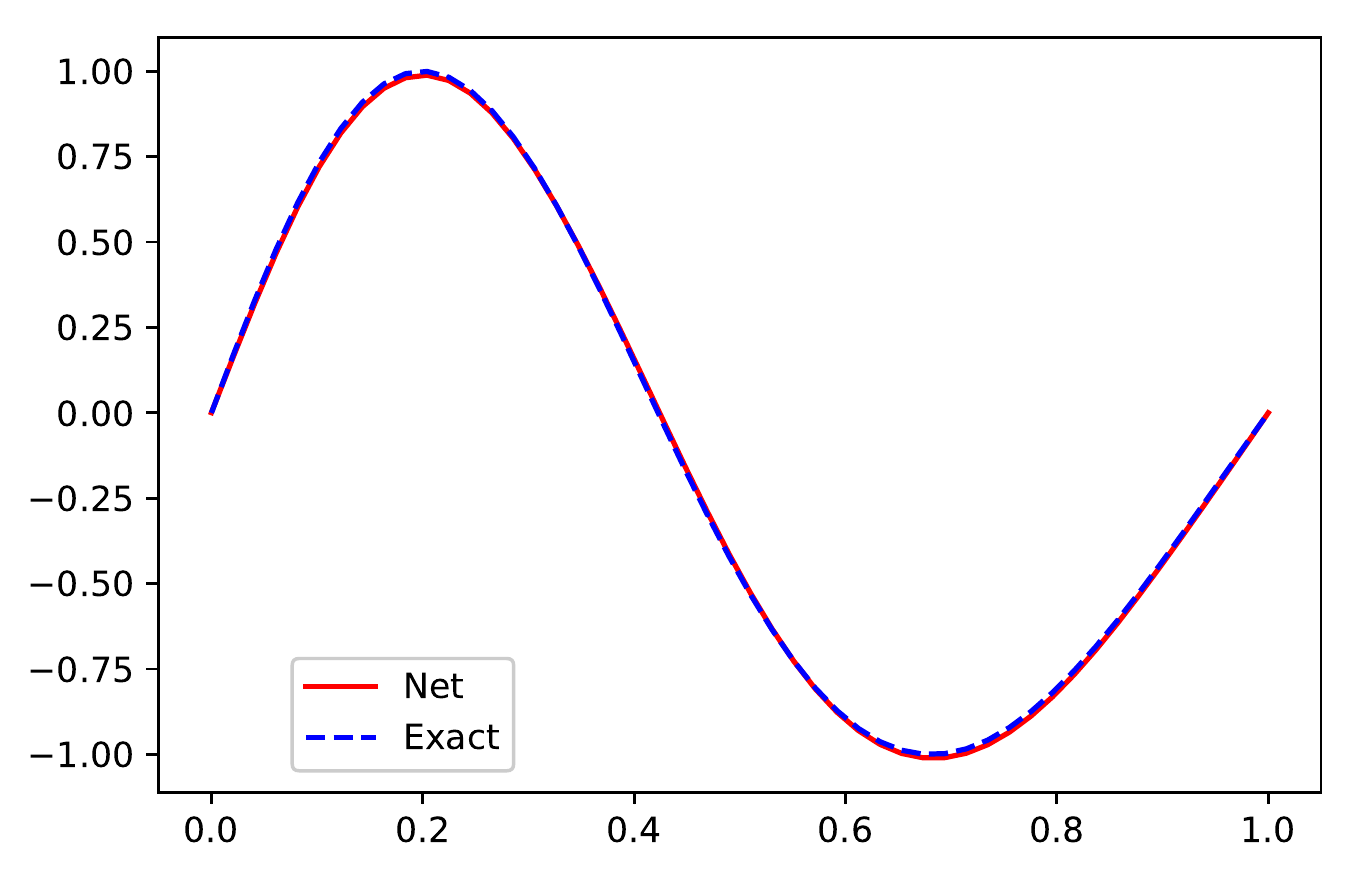}\label{fig:5-7}}
\hfil
\subfloat[]{\includegraphics[width=1.8in]{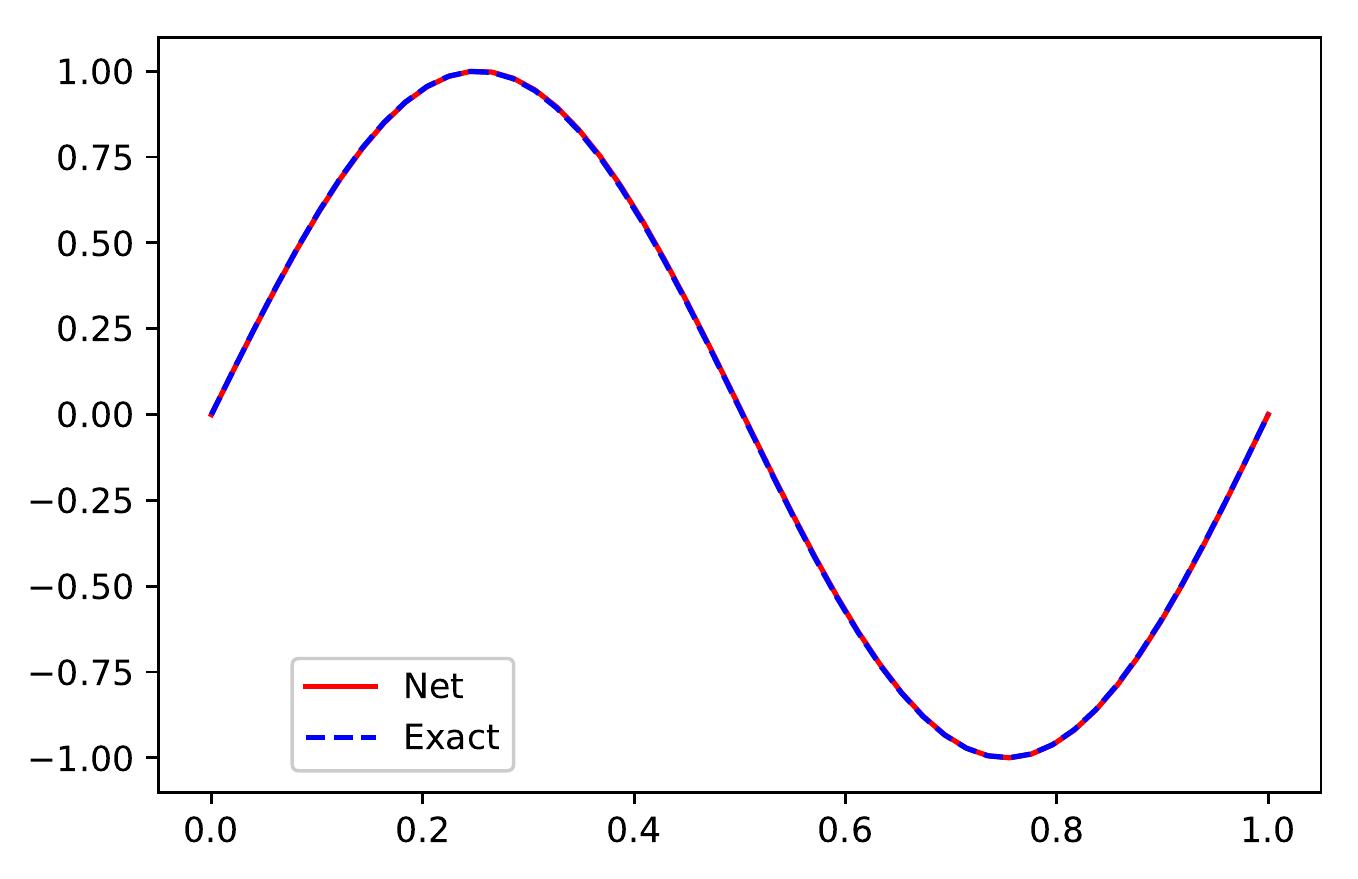}\label{fig:5-8}}
\hfil
\subfloat[]{\includegraphics[width=1.8in]{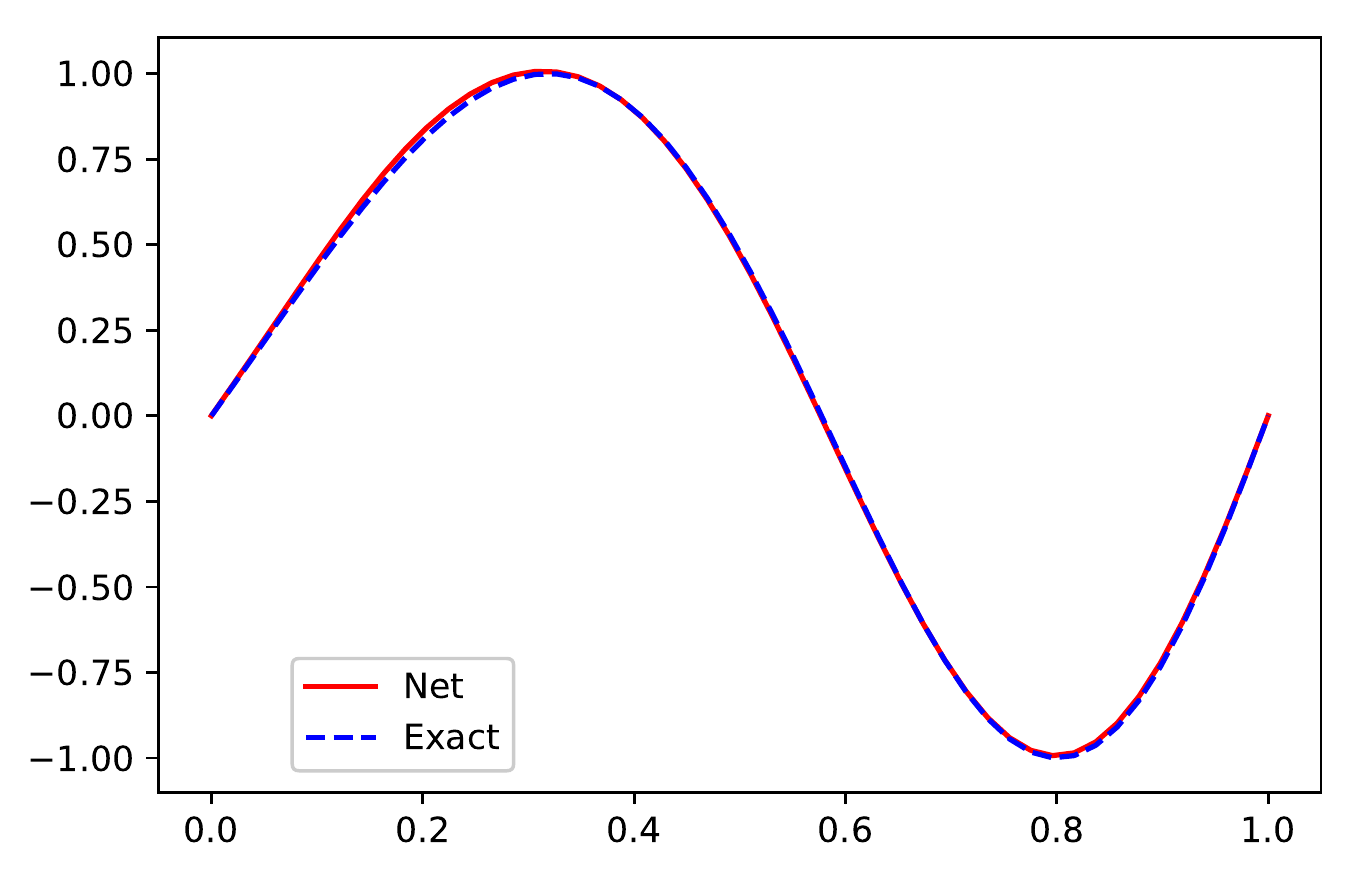}\label{fig:5-9}}
\caption{Comparison of numerical solution in 3D case obtained by non-AP-DNN and APFOS-DNN methods (\emph{Case 2}). From top to bottom: non-AP-DNN with $\varepsilon=1e-02$; APFOS-DNN with $\varepsilon=1e-02$; APFOS-DNN with $\varepsilon=1e-20$. From left to right: location $(x=0.5,z=0)$, $(x=0.5,z=0.5)$, and $(x=0.5,z=1)$.}
\label{fig:5}
\end{figure*}

\subsubsection{Data-driven discovery}

A reliable numerical prediction not only needs a good knowledge of physical model with initial/boundary conditions and an efficient numerical method, but the physical coefficient in the model. As mentioned in 2D and 3D forward modeling, the anisotropic strength $\varepsilon$ in \eqref{eq:anisotropic_pb} leads to solution with different characteristics, even makes the problem ill-posed. In this scenario, it is necessary to identify the coefficient $\varepsilon$ before forward modeling. Herein, we attempt to use the neural network-based approach to retrieve $\varepsilon$ by providing limited information of the solution, called observation in the field of inverse problem. The following architecture of neural network is established.

2D examples established by \emph{Setup I} with three cases of parameter settings are investigated, i.e., \emph{Case 1}, \emph{2} and \emph{3}. The identification problem is conducted at a square domain $\Omega=[0,1]\times[0,1]$ with $N\times N=100\times100$ grids. A neural network covering four hidden layers with $40$ and $60$ neurons per layer is adopted in the non-AP-DNN method and APFOS-DNN method. The numbers of the collocation points at the Neumann boundary $\Gamma_N=[0,1]\times\{0,1\}$ are set to $N_{n}=100$. The exact solution $\phi_{e}$ available at $N_{f}=3000$ points inside domain is taken as observation. Notice that the DNN-based identification problem includes additional training step in which only the model coefficient $\varepsilon$ is updated by the available observation (referring to Alg. \ref{alg:2}). The Adam algorithm is employed in the training step and set to $5000$ iterations. It is empirically observed that due to a reliable estimation on coefficient via such training step, the following simultaneous identification of coefficient $\varepsilon$ and optimization of the network parameters $\Theta$ could converge fast.

For identifying the anisotropic strength $\varepsilon$, empirically, in the loss function \eqref{eq:ident_discrete_LSF_AP2D} and \eqref{eq:ident_discrete_LSF_anisotropic} the weights in the term with respect to $\varepsilon$ should be changed larger for enhancing the importance of such terms. Herein, we take the weights $(\beta_{e}, \beta_{f_{1}}, \beta_{f_{2}}, \beta_{f_{3}}, \beta_{N_{1}}, \beta_{N_{2}})=(N_{f}, 1, 1, N_{f}, N_{n}, 1)$ and $(\alpha_{e}, \alpha_{f}, \alpha_{N})=(N_{f}, N_{f}, N_{n})$.

Table \ref{tab:ipcase123} displays the results of the identification problem obtained by DNN-based non-AP and APFOS methods, the $E_{2}$ errors and estimated coefficient $\varepsilon$. The promising results of APFOS-DNN approach indicate its capacity of dealing with the forward and inverse problem. In the contrary, the non-AP-DNN one could not yield results with desired accuracy and particularly it does not work in \emph{Case 3}. Finally, to show the convergence of the proposed method, the changing of estimated coefficient $\varepsilon$ with the increasing iterations is provided by Fig. \ref{fig:6-2}-\ref{fig:6-4}. It can be seen that the APFOS-DNN method converges fast and stably both in the training and prediction steps. Additionally, some test cases including observation with gaussian noise of different levels are taken into account. The results indicate that the APFOS-DNN method is robust to gaussian noise.

As a result, compared with the non-AP-DNN method, the proposed APFOS-DNN method is more suitable to identify any anisotropic strength $\varepsilon \geq 0$.

\begin{table}[]
\renewcommand{\arraystretch}{1.5}
    \caption{Comparison of identification results. Normal and bold fonts represent estimated $\varepsilon$ and $E_{2}$ errors of the numerical solution.}
    \vspace{10pt}
    \centering
    \begin{tabular}{c|c|ccc}
        \hline
        \diagbox{Scheme}{Exact $\varepsilon$} & Case & 1 & 1e-02 & 1e-20  \\
        \hline
        \multirow{6}*{\minitab[c]{Non-AP-DNN}}& \multirow{2}*{\minitab[c]{\emph{Case 1}}}  & 9.96e-01   & 9.97e-03             & 4.16e-05   \\
         && \textbf{3.24e-03}   & \textbf{1.22e-04}             & \textbf{1.40e-04}                       \\ \cline{3-5}
        &\multirow{2}*{\minitab[c]{\emph{Case 2}}}  & 9.97e-01   & 8.20e-03             & 1.54e-03                          \\
        && \textbf{3.55e-03}   & \textbf{1.70e-03}             & \textbf{1.51e-03}                      \\ \cline{3-5}
        &\multirow{2}*{\minitab[c]{\emph{Case 3}}}  & 4.38e-03   & 2.26e-02             & 2.17e-02                          \\
        && \textbf{3.58e-01}   & \textbf{2.23e-02}             & \textbf{2.26e-02}                      \\ \hline
        \multirow{6}*{\minitab[c]{APFOS-DNN}}&\multirow{2}*{\minitab[c]{\emph{Case 1}}}  & 1.00   & 1.00e-02            & 4.58e-07                  \\
        && \textbf{4.34e-04}   & \textbf{1.21e-04}            & \textbf{3.33e-05}                          \\ \cline{3-5}
        &\multirow{2}*{\minitab[c]{\emph{Case 2}}}   & 1.00            & 1.00e-02    & 1.42e-05                    \\
        && \textbf{4.99e-04}   & \textbf{8.29e-05}            & \textbf{2.73e-05}                          \\ \cline{3-5}
      &\multirow{2}*{\minitab[c]{\emph{Case 3}}}   & 9.71e-01            & 1.02e-02    & 4.11e-03  \\
       & & \textbf{7.36e-03}   & \textbf{2.25e-03}            & \textbf{1.93e-03}                     \\ \hline
    \end{tabular}
    \label{tab:ipcase123}
\end{table}

\begin{figure*}[!t]
\centering
\subfloat[Exact $\varepsilon=1$]{\includegraphics[width=2.5in]{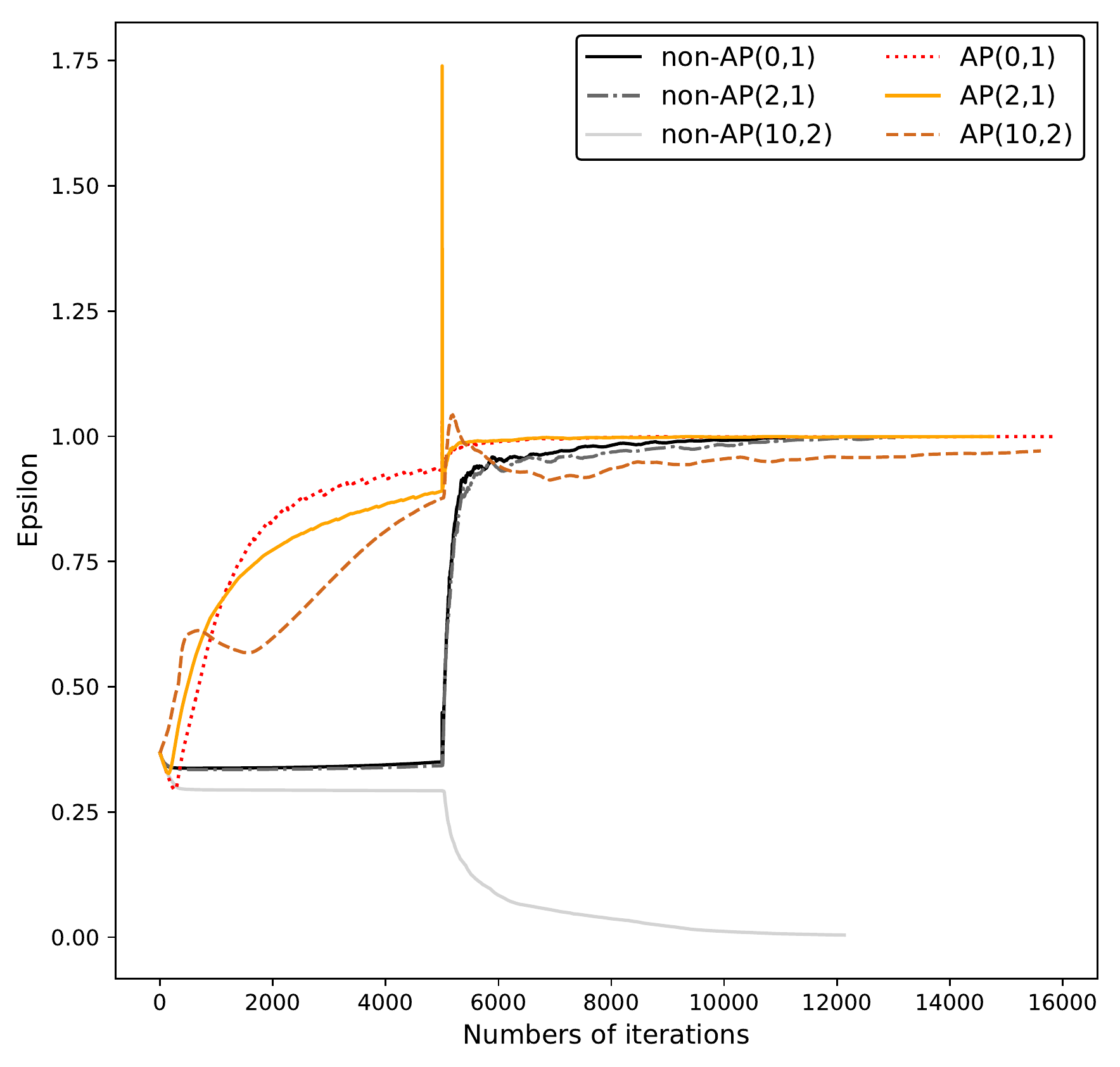}\label{fig:6-2}}
\hfil
\subfloat[Exact $\varepsilon=1e-02$]{\includegraphics[width=2.5in]{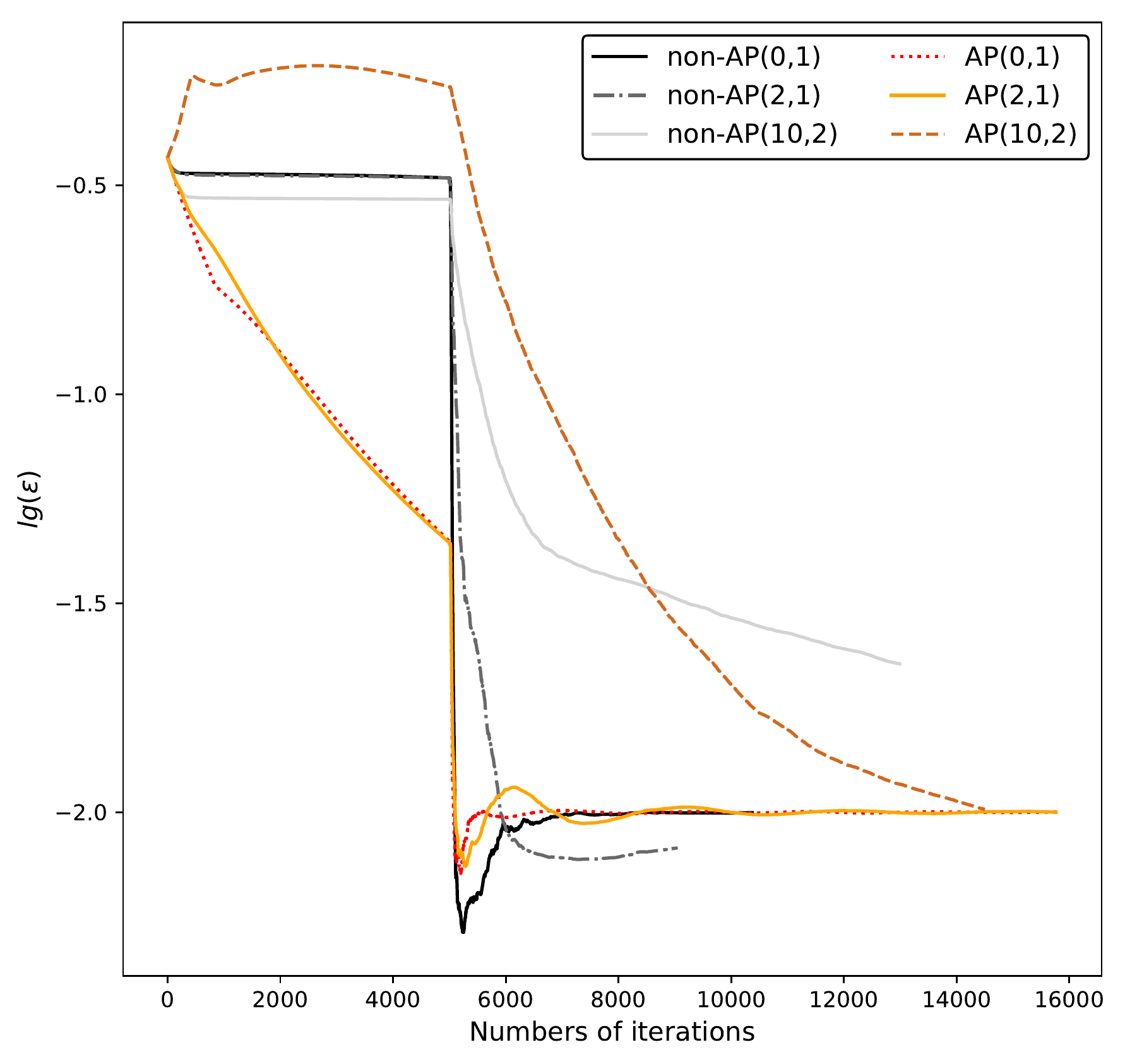}\label{fig:6-3}}
\vfil
\subfloat[Exact $\varepsilon=1e-20$]{\includegraphics[width=2.5in]{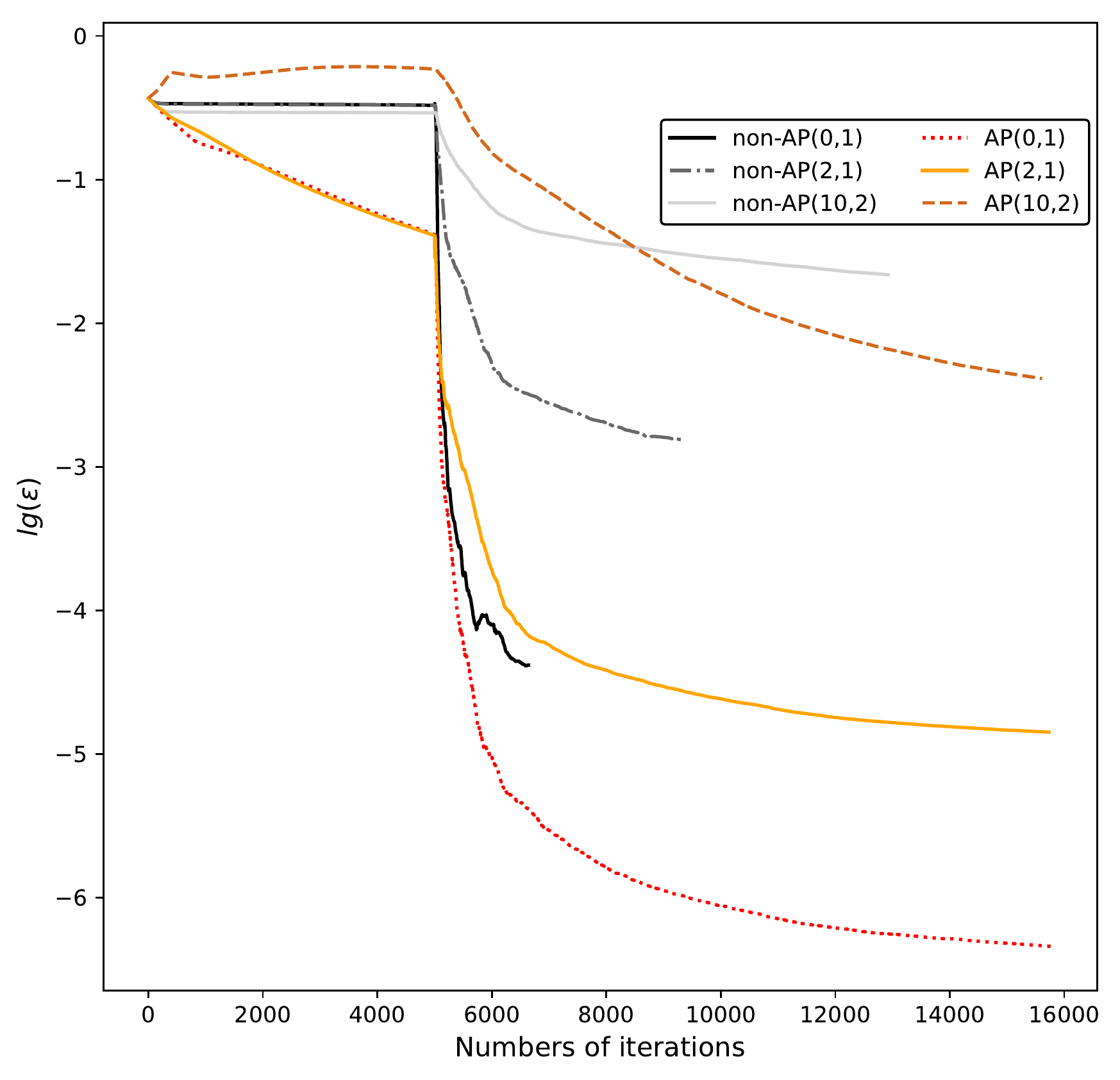}\label{fig:6-4}}
\caption{Comparison of identification results. Non-AP/AP$(\theta,m)$ in the legend represents the non-AP-DNN/APFOS-DNN method with parameters $(\theta,m)$.}
\label{fig:6}
\end{figure*}

\section{Conclusion and perspectives}

In this paper, an asymptotic preserving scheme based on first-order system (APFOS) for solving the anisotropic elliptic equations is proposed. The APFOS scheme is well-posed with respect to all anisotropic strength $\varepsilon\geq0$. We also rewrite the APFOS scheme into least-squares formulations. Then the APFOS scheme is numerically solved in deep neural network (DNN) framework. Comparing to non-AP method, our APFOS scheme is uniformly accurate with respect to all $\varepsilon$. It can treat various 2D/3D anisotropic elliptic equations with aligned, non-aligned, closed magnetic field. Finally, the APFOS scheme is applied to identify the anisotropic strength $\varepsilon$. Similarly, the APFOS scheme can predict accurately $\varepsilon$ in all cases, whereas the non-AP scheme does not work if we want to identify $\varepsilon\ll 1$.

The further coming work is to develop adaptive DNN for the APFOS scheme. Indeed, in current numerical performance of the APFOS scheme, we first select hyper parameters, such as number of layers, neurous, collocation points {\it  etc.}, from a simple twin experiment, then all tests will use those hyper parameters. However, due to the complexity of certain tests, such as 3D tests or closed magnetic field tests, the accuracy seems not to be optimal. As pointed in~\cite{cai2020deep}, an  adaptive DNN will be needed, where least-squares functional can be used as {\it a posteriori} estimator, thus the parameters of DNN will be selected in iterations.

\section{Acknowledgements}

This work has been supported by Heilongjiang Natural Science Foundation (LH2019A013).

\appendix
\section{The non-AP least squares schemes for the anisotropic equation~\eqref{eq:AP2Dnonaligned}}\label{sec:nonAP}

\subsection{The non-AP LS scheme for solving  the anisotropic equation~\eqref{eq:AP2Dnonaligned}}\label{sec:nonAPLS}

For the anisotropic elliptic equation~\eqref{eq:anisotropic_nonaligned2D},
the  LS formulations is to find $\phi\in H^2(\Omega)$ such that
\begin{align}\label{eq:continuous_LSFormulation_anisotropic}
\mathcal{L}(\phi;\mathbf{f}) = \min_{\psi\in H^2(\Omega)}\mathcal{L}(\psi;\mathbf{f}).
\end{align}
LS functional is inspired from the balanced least-squares (BLS) functional of~\cite{cai2020deep}, in which they point out that usual norm on boundary condition is weaker than that for the equation, thus a balanced LS functional  is adopted as follows
\begin{equation}\label{eq:continuous_LSF_anisotropic}
\mathcal{L}(\psi;\mathbf{f}) = \|\varepsilon\Delta_\bot\psi + \Delta_\|\psi + \varepsilon f\|^2_{0,\Omega} + \|\psi-g\|^2_{3/2,\Gamma_D} + \|\varepsilon\nabla_\bot \psi \cdot \mathbf{n} +
\nabla_{\|}\psi \cdot \mathbf{n} \|^2_{1/2,\Gamma_N}.
\end{equation}

Then we discretize~\eqref{eq:continuous_LSFormulation_anisotropic}-\eqref{eq:continuous_LSF_anisotropic} as follows. The discrete LS formulation reads
\begin{align*}
\hat{\mathcal{L}}[\hat{\phi};\mathbf{f}]({\Theta}) = \min_{\tilde{\Theta}\in\mathbb{R}^N}\hat{\mathcal{L}}[\hat{\psi} ;\mathbf{f}](\tilde{\Theta}),
\end{align*}
where $\hat{\phi} = \hat{\phi}(\mathbf{x},\Theta)$ represents an approximation of solution of anisotropic equation~\eqref{eq:anisotropic_nonaligned2D}, depending on the location $\mathbf{x}\in\mathbb{R}^2$ and  the  weights and biases  denoted as $\Theta\in\mathbb{R}^N$.
The corresponding discrete LS functional is
\begin{equation}\label{eq:discrete_LSF_anisotropic}
\begin{array}{ll}
\hat{\mathcal{L}}[\hat{\psi} ;\mathbf{f}](\tilde{\Theta})
= &\frac{1}{N_f}\sum\limits_{i=1}^{N_f}\left(\varepsilon\Delta_\bot\hat{\psi}(\mathbf{x}_i,\tilde{\Theta}) + \Delta_\|\hat{\psi}(\mathbf{x}_i,\tilde{\Theta}) + \varepsilon f(\mathbf{x}_i)\right)^2 +\frac{\alpha_D}{N_d}\sum\limits_{j=1}^{N_d}\left(\hat{\psi}(\mathbf{x}_j,\tilde{\Theta})- g(\mathbf{x}_j)\right)^2\\[3mm]
&  +\frac{\alpha_N}{N_n}\sum\limits_{k=1}^{N_n}\left(\varepsilon\nabla_\bot \hat{\psi}(\mathbf{x}_k,\tilde{\Theta}) \cdot \mathbf{n}(\mathbf{x}_k) +
\nabla_{\|}\hat{\psi}(\mathbf{x}_k,\tilde{\Theta}) \cdot \mathbf{n}(\mathbf{x}_k)\right)^2.
\end{array}
\end{equation}
Notice that the discrete LS functional~\eqref{eq:discrete_LSF_anisotropic} is an approximation of~\eqref{eq:continuous_LSF_anisotropic}, where $H^{3/2}$ and $H^{1/2}$ norms are replaced by weighted $L^2$ norms. The weights $\alpha_D$ and $\alpha_N$ are specified in Section~\ref{sec:numresults}.

\subsection{The non-AP LS scheme for identifying the anisotropic strength $\varepsilon$}
\label{sec:nonAPLS_discrete}
Let us now consider a non-AP identification scheme for the anisotropic problem~\eqref{eq:anisotropic_nonaligned2D}.
The identification LS formulations is to find $\varepsilon\in\mathbb{R}^+$ and $\phi\in H^2(\Omega)$ such that
\begin{align*}
\mathcal{L}(\varepsilon,\phi;\mathbf{f}) = \min_{\tilde{\varepsilon}\in\mathbb{R}^+,\psi\in H^2(\Omega)}\mathcal{L}(\tilde{\varepsilon},\psi;\mathbf{f}).
\end{align*}
The corresponding LS functional, by adding \emph{priori} information $\phi_e$ to~\eqref{eq:continuous_LSF_anisotropic}, reads
\begin{equation}\label{eq:iden_nonap-LSfct}
\begin{array}{ll}
\mathcal{L}(\tilde{\varepsilon},\psi;\mathbf{f}) = & \|\psi - \phi_e\|^2_{0,\Omega} \\[3mm]
& + \|\tilde{\varepsilon}\Delta_\bot\psi + \Delta_\|\psi + \tilde{\varepsilon} f\|^2_{0,\Omega} + \|\psi-g\|^2_{3/2,\Gamma_D} + \|\tilde{\varepsilon}\nabla_\bot \psi \cdot \mathbf{n} +
\nabla_{\|}\psi \cdot \mathbf{n} \|^2_{1/2,\Gamma_N},
\end{array}
\end{equation}
where $\mathbf{f}= (f,g)$, $\tilde{\varepsilon}\in\mathbb{R}^+$ and $\psi\in H^2(\Omega)$.

The discrete LS formulation reads
\begin{subequations}\label{eqs:ident_discrete_LSFormulation_anisotropic}
\begin{align}\label{eq:ident_discrete_LSFormulation_anisotropic}
\hat{\mathcal{L}}[\hat{\phi};\mathbf{f}](\varepsilon^*,\Theta) = \min_{\tilde{\varepsilon}^*\in\mathbb{R},\tilde{\Theta}\in\mathbb{R}^N}\hat{\mathcal{L}}[\hat{\psi} ;\mathbf{f}](\tilde{\varepsilon}^*,\tilde{\Theta}).
\end{align}
where the corresponding discrete  LS functional to~\eqref{eq:iden_nonap-LSfct} is
\begin{equation}\label{eq:ident_discrete_LSF_anisotropic}
\begin{array}{ll}
\hat{\mathcal{L}}[\hat{\psi} ;\mathbf{f}](\tilde{\varepsilon}^*,\tilde{\Theta})
= & \frac{\alpha_{e}}{N_f}\sum\limits_{i=1}^{N_f}\left(\hat{\psi}(\mathbf{x}_i,\tilde{\Theta}) - \phi_e(\mathbf{x}_i)\right)^2 \\[3mm]
%
%
 &+ \frac{\alpha_{f}}{N_f}\sum\limits_{i=1}^{N_f}\left(\exp(\tilde{\varepsilon}^*)\Delta_\bot\hat{\psi}(\mathbf{x}_i,\tilde{\Theta}) + \Delta_\|\hat{\psi}(\mathbf{x}_i,\tilde{\Theta}) + \exp(\tilde{\varepsilon}^*) f(\mathbf{x}_i)\right)^2 \\[3mm]
&  +\frac{\alpha_N}{N_n}\sum\limits_{k=1}^{N_n}\left(\exp(\tilde{\varepsilon}^*)\nabla_\bot \hat{\psi}(\mathbf{x}_k,\tilde{\Theta}) \cdot \mathbf{n}(\mathbf{x}_k) +
\nabla_{\|}\hat{\psi}(\mathbf{x}_k,\tilde{\Theta}) \cdot \mathbf{n}(\mathbf{x}_k)\right)^2.
\end{array}
\end{equation}
\end{subequations}

\end{document}